
\documentclass[preprint,11pt]{elsarticle}




\usepackage{amssymb}
\usepackage{amsmath}
\usepackage{amsfonts}
\usepackage{amssymb}
\usepackage{indentfirst,latexsym,bm}
\usepackage{amsthm}
\usepackage[all]{xy}
\newtheorem{theorem}{Theorem}[section]
\newtheorem{lemma}[theorem]{Lemma}
\newtheorem{corollary}[theorem]{Corollary}
\newtheorem{proposition}[theorem]{Proposition}

\theoremstyle{definition}

\theoremstyle{definition}
\newtheorem{example}{Example}[section]

\theoremstyle{remark}
\newtheorem{remark}{Remark}[section]
\theoremstyle{question}

\numberwithin{equation}{section}






\journal{XXX}

\begin{document}

\begin{frontmatter}



\title{New multiplicative perturbation bounds for the generalized polar decomposition}

\author[rvt]{Na Liu}
\ead{liunana0616@163.com}
\author[rvt]{Wei Luo\fnref{fn1}}
\ead{luoweipig1@163.com}
\author[rvt]{Qingxiang Xu\corref{cor1}\fnref{fn1}}
\ead{qxxu@shnu.edu.cn,qingxiang\_xu@126.com}
\cortext[cor1]{Corresponding author}
\fntext[fn1]{Supported by the
National Natural Science Foundation of China (11671261).}
\address[rvt]{Department of Mathematics, Shanghai Normal University, Shanghai 200234, PR China}



\begin{abstract}Some new Frobenius norm bounds of the unique solution to certain structured Sylvester equation are derived.
Based on the derived norm upper bounds, new multiplicative perturbation bounds are provided both for subunitary polar factors and  positive semi-definite polar factors. Some previous results are then improved.

\end{abstract}

\begin{keyword} Structured Sylvester equation;  multiplicative perturbation; generalized polar decomposition; Frobenius norm bound; Moore-Penrose inverse\MSC 15A09; 15A23; 15A24; 15A60; 65F35



\end{keyword}

\end{frontmatter}



\section{Introduction}
Throughout this paper, $\mathbb{C}^{m\times n}$ is the set of $m\times n$ complex  matrices, $\mathbb{C}_r^{m\times n}$ is the subset of  $\mathbb{C}^{m\times n}$ consisting of matrices with rank $r$, and $I_n$ is the identity matrix in $\mathbb{C}^{n\times n}$. When $A$ is a square matrix, $tr(A)$ stands for the trace of $A$. Let
$P_X$ denote the orthogonal projection from $\mathbb{C}^n$ onto its linear subspace $X$.
For any $A\in\mathbb{C}^{m\times n}$, let $A^*$, $\mathcal{R}(A)$, $\mathcal{N}(A)$,  $\Vert A\Vert_2$, $\Vert A\Vert_{F}$ and $A^{\dag}$ denote the conjugate transpose, the range, the null space,  the spectral norm, the Frobenius norm and the Moore-Penrose inverse of $A$ respectively, where $A^{\dag}$ is the unique element of $\mathbb{C}^{n\times m}$ which satisfies
$$AA^\dag A=A,\ A^\dag A A^\dag=A^\dag,\ (AA^\dag)^*=AA^\dag\ \mbox{and}\ (A^\dag A)^*=A^\dag A.$$
It is known that $(A^*)^\dag=(A^\dag)^*$ and $(AA^*)^\dag=(A^*)^\dag A^\dag$. Furthermore, if $A$ is Hermitian positive semi-definite, then
$A^\dag$ is also Hermitian positive semi-definite such that $(A^\dag)^\frac12=(A^\frac12)^\dag$.

The polar decomposition is one of the most important factorizations, which occurs in various contexts.  For any $T\in\mathbb{C}^{m\times n}$,
the polar decomposition of  $T$ \cite{Barrlund,Higham,Horn-Johnson,REN-CANG LI-2,Li-Sun-2} is a
factorization $T=UH$,
where $H\in\mathbb{C}^{n\times n}$ is Hermitian positive semi-definite and $U\in \mathbb{C}^{m\times n}$ satisfies one of the
following conditions:
$$
    \begin{array}{c}
      U^*U=I_n,\ \mbox{if $m\ge n$},\\
      UU^*=I_m, \ \mbox{if $m<n$.} \\
    \end{array}
$$
In particular, $U$ is a unitary matrix when $m=n$. As a generalization of the trigonometric representation of a complex number, this decomposition for complex matrices is closely related to the Singular Value Decomposition (SVD). More precisely, let $T\in \mathbb{C}^{m\times n}_r$ with $m\ge n$, then
$T^*T=HU^*UH=H^2$, hence $H$ is unique such that $H=|T|=(T^*T)^\frac12$. Furthermore, it follows from \cite[Theorem~8.1]{Higham-2} that all possible $U$ are given by
\begin{equation}\label{equ:polar decomposition for matrices in terms of W}
U = P \left(
        \begin{array}{cc}
          I_r & 0 \\
          0 & W \\
        \end{array}
      \right)Q^*,
\end{equation}
 where $$T=P\left(
            \begin{array}{cc}
              \Sigma_r & 0 \\
              0 & 0_{m-r,n-r} \\
            \end{array}
          \right)Q^*$$
is the SVD of $T$ and $W\in\mathbb{C}^{m-r,n-r}$ is
arbitrary subject to having orthonormal columns.

The generalized polar decomposition of $T\in\mathbb{C}^{m\times n}$ \cite{Chen-Li,Hong-Meng-Zheng-1,REN-CANG LI-1,Li-Sun-1,Sun-Chen} (also called the canonical polar decomposition in \cite{Higham-Mehl-Tisseur})
is the case where $W$ in \eqref{equ:polar decomposition for matrices in terms of W} is the zero matrix, which can be characterized as
\begin{equation}\label{polar decomposition-Hilbert space}T=U|T|\ \mbox{and}\ \mathcal{N}(T)=\mathcal{N}(U),\end{equation}
or equivalently,
\begin{equation}\label{polar decomposition-finite-dimensional case}T=U|T|\ \mbox{and}\ \mathcal{R}(T^*)=\mathcal{R}(U^*)\end{equation}
since $\mathcal{N}(T)^\bot=\mathcal{R}(T^*)$ and $\mathcal{N}(U)^\bot=\mathcal{R}(U^*)$ in the finite-dimensional case.
Such a matrix $U$ is unique \cite{Sun-Chen}, which is called a partial isometry (also called the subunitary polar factor of $T$ in many literatures). The matrix $|T|$ is usually called  the positive semi-definite polar factor of $T$.

There are other types of polar decompositions associated to the finite-dimensional spaces, such as the weighted generalized polar decomposition for matrices \cite{Hong-Meng-Zheng,Yang-Li-1,Yang-Li-2}, the polar decomposition for Lie groups \cite{Iserles-Zanna,Munthe-Kaas-Quispel-Zanna,Zanna-Munthe-Kaas}, and the polar decomposition for matrices acting on indefinite inner spaces \cite{Bolshakovy,Higham-Mehl-Tisseur,Lins-Meade-Mehl-Rodam}.
The polar decomposition also works for bounded linear operators on Hilbert spaces. Let $H, K$ be two Hilbert spaces and $\mathbb{B}(H,K)$ be the set of bounded linear operators from $H$ to $K$. It is well-known  that any $T\in\mathbb{B}(H,K)$ has the unique polar decomposition \eqref{polar decomposition-Hilbert space}, where $U\in\mathbb{B}(H,K)$ is a partial isometry  \cite{Halmos}. So the polar decomposition for elements of $\mathbb{B}(H,K)$  is exactly the direct generalization of the  generalized polar decomposition for matrices. Note that if  $H=K$, then $\mathbb{B}(H,H)$, abbreviated to $\mathbb{B}(H)$, is a von Neumann algebra. It follows from \cite[Proposition~2.2.9]{Pedersen} that the polar decomposition also works for a general von Neumann algebra. Nevertheless, it may fail to work for a general $C^*$-algebra; see \cite[Remark~1.4.6]{Pedersen}. For some applications of the polar decomposition, the reader is referred to \cite{Higham,Kenny-Laub}.

In this paper, we restrict our attention to the generalized polar decomposition for matrices. As mentioned above, the generalized polar decomposition of a matrix $A\in\mathbb{C}^{m\times n}$ is formulated by $A=U|A|$, where $|A|=(A^*A)^\frac12$ and $U\in\mathbb{C}^{m\times n}$ is a partial isometry such that $U^*U=P_{\mathcal{R}(A^*)}$. Since $U$ is a partial isometry, we have $UU^*U=U$. Furthermore, it can be deduced from \eqref{polar decomposition-Hilbert space} and \eqref{polar decomposition-finite-dimensional case} that
\begin{eqnarray}\label{eqn:basic properties of U-1}&&UU^*=P_{\mathcal{R}(A)}=AA^\dag, U^*U=A^\dag A, A=|A^*|U\\
\label{eqn:basic properties of U-2}&& A^*=U^*|A^*|=|A|U^*,|A^*|=U|A|U^*, |A|=U^*|A^*|U.
\end{eqnarray}

One research field of the generalized polar decomposition is its perturbation theory.
Let $A\in\mathbb{C}^{m\times n}$ be given and $B\in\mathbb{C}^{m\times n}$ be a perturbation of $A$. Clearly, $\mbox{rank}(A)=\mbox{rank}(B)$ if and only if there exist $D_1\in\mathbb{C}^{m\times m}$ and $D_2\in\mathbb{C}^{n\times n}$ such that
\begin{equation}\label{equ:defn of B}B=D_1^* A D_2,\ \mbox{where}\ D_1\ \mbox{and}\ D_2\ \mbox{are both nonsingular}.
\end{equation}
The matrix $B$ given by (\ref{equ:defn of B}) is called a multiplicative perturbation of $A$.
Multiplicative perturbation Frobenius norm upper bounds for subunitary polar factors and  positive semi-definite polar factors are carried out in  \cite{Chen-Li-Sun} and \cite{Hong-Meng-Zheng} respectively as follows:

Let $\Omega\in\mathbb{C}^{m\times m},\Gamma\in\mathbb{C}^{n\times n}$ and $S\in\mathbb{C}^{m\times n}$. It is well-known \cite{Horn-Johnson} that
the Sylvester equation $\Omega X-X\Gamma=S$ has a unique solution $X\in \mathbb{C}^{m\times n}$ if and only if
$\lambda(\Omega)\cap\lambda(\Gamma)=\emptyset$, where $\lambda(\Omega)$ and $\lambda(\Gamma)$ denote the spectrums of $\Omega$ and $\Gamma$, respectively.  The Sylvester equation appears in many problems in science and technology, e.g., control theory, model reduction, the
numerical solution of Riccati equations, image processing and so on. To deal with eigenspace and singular subspace variations, the structured Sylvester equation $\Omega X-X\Gamma=S$ with $S=\Omega C+D\Gamma$ is considered in \cite{Ren-Cang-Li} for
any $C,D\in\mathbb{C}^{m\times n}$, and a Frobenius norm upper bound of the unique solution $X$ is obtained  as follows:
\begin{lemma}\label{lem:Ren Cang Li's result}{\rm \cite[Lemma~2.2]{Ren-Cang-Li}}\ Let $\Omega\in\mathbb{C}^{m\times m}$ and $\Gamma\in\mathbb{C}^{n\times n}$ be two Hermitian matrices, and let
$C,D\in\mathbb{C}^{m\times n}$ be arbitrary. If $\lambda(\Omega)\cap\lambda(\Gamma)=\emptyset$, then the Sylvester equation
$\Omega X-X\Gamma=\Omega C+D\Gamma$ has a unique solution $X\in\mathbb{C}^{m\times n}$ such that
\begin{equation}\label{equ:general F upper bound of Li-Rencang}
\Vert X\Vert_F\le \sqrt{\Vert C\Vert_F^2+\Vert D\Vert_F^2}\Big/\eta,
\end{equation}
where
\begin{equation} \label{equ:defn of eta-Li-Rencang}
\eta=\min_{\omega\in\lambda(\Omega),\gamma\in\lambda(\Gamma)}
\frac{|\omega-\gamma|}{\sqrt{|\omega|^2+|\gamma|^2}}.
\end{equation}
\end{lemma}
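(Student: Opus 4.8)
The plan is to diagonalize both Hermitian matrices simultaneously via unitary similarity and reduce the operator (matrix) Sylvester equation to a scalar problem. Since $\Omega$ and $\Gamma$ are Hermitian, I would write $\Omega = P\,\mathrm{diag}(\omega_1,\dots,\omega_m)\,P^*$ and $\Gamma = R\,\mathrm{diag}(\gamma_1,\dots,\gamma_n)\,R^*$ with $P,R$ unitary and all $\omega_i,\gamma_j$ real. Substituting $\widetilde X = P^*XR$, $\widetilde C = P^*CR$, $\widetilde D = P^*DR$, the equation $\Omega X - X\Gamma = \Omega C + D\Gamma$ becomes $\mathrm{diag}(\omega)\widetilde X - \widetilde X\,\mathrm{diag}(\gamma) = \mathrm{diag}(\omega)\widetilde C + \widetilde D\,\mathrm{diag}(\gamma)$, which decouples entrywise. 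Because the Frobenius norm is unitarily invariant, $\Vert X\Vert_F = \Vert \widetilde X\Vert_F$ and similarly for $C,D$, so it suffices to prove the bound for the transformed quantities.

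Carrying out the entrywise analysis, the $(i,j)$ entry satisfies $(\omega_i-\gamma_j)\widetilde x_{ij} = \omega_i\widetilde c_{ij} + \gamma_j\widetilde d_{ij}$. The hypothesis $\lambda(\Omega)\cap\lambda(\Gamma)=\emptyset$ guarantees $\omega_i\neq\gamma_j$ for every pair, so each entry is uniquely solvable, establishing existence and uniqueness of $X$. Solving gives $\widetilde x_{ij} = \dfrac{\omega_i\widetilde c_{ij} + \gamma_j\widetilde d_{ij}}{\omega_i-\gamma_j}$. The core estimate is then the pointwise bound $|\widetilde x_{ij}|^2 \le \dfrac{|\omega_i|^2+|\gamma_j|^2}{|\omega_i-\gamma_j|^2}\big(|\widetilde c_{ij}|^2+|\widetilde d_{ij}|^2\big)$, which I would obtain from the Cauchy--Schwarz inequality applied to the inner product of the vectors $(\omega_i,\gamma_j)$ and $(\widetilde c_{ij},\widetilde d_{ij})$.

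Summing the pointwise bound over all $i,j$ and factoring out the worst-case coefficient yields
\begin{equation*}
\Vert X\Vert_F^2 = \sum_{i,j}|\widetilde x_{ij}|^2 \le \Big(\max_{i,j}\frac{|\omega_i|^2+|\gamma_j|^2}{|\omega_i-\gamma_j|^2}\Big)\sum_{i,j}\big(|\widetilde c_{ij}|^2+|\widetilde d_{ij}|^2\big) = \frac{1}{\eta^2}\big(\Vert C\Vert_F^2+\Vert D\Vert_F^2\big),
\end{equation*}
where the identification of the maximal coefficient with $1/\eta^2$ is immediate from the definition \eqref{equ:defn of eta-Li-Rencang} of $\eta$. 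Taking square roots gives \eqref{equ:general F upper bound of Li-Rencang}.

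I expect the only genuinely delicate point to be the scalar Cauchy--Schwarz step, specifically confirming that the constant $\dfrac{|\omega|^2+|\gamma|^2}{|\omega-\gamma|^2}$ is the sharp coefficient compatible with the stated $\eta$; everything else (diagonalization, unitary invariance of $\Vert\cdot\Vert_F$, decoupling, and summation) is routine. One should also verify that the maximum defining $\eta$ is attained and positive, which follows because the spectra are finite disjoint sets of reals so $|\omega-\gamma|>0$ throughout.
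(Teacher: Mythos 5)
Your proof is correct. Note that the paper itself does not prove this lemma at all -- it is imported verbatim as \cite[Lemma~2.2]{Ren-Cang-Li} -- and your argument (simultaneous unitary diagonalization, entrywise decoupling to $(\omega_i-\gamma_j)\widetilde x_{ij}=\omega_i\widetilde c_{ij}+\gamma_j\widetilde d_{ij}$, Cauchy--Schwarz on the pair $(\omega_i,\gamma_j)$ versus $(\widetilde c_{ij},\widetilde d_{ij})$, and summation using unitary invariance of $\Vert\cdot\Vert_F$) is precisely the standard proof given in the cited source, with the delicate points (positivity of $\eta$, nonvanishing of $|\omega_i|^2+|\gamma_j|^2$ forced by disjointness of the spectra) correctly accounted for.
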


A direct application of the preceding lemma is as follows:
\begin{corollary}\label{cor:Chen-Li-Sun's observation}\ Let $A\in\mathbb{C}^{m\times m}$ and $B\in\mathbb{C}^{n\times n}$ be two Hermitian positive definite matrices. Then for any $C,D\in\mathbb{C}^{m\times n}$, the Sylvester equation
\begin{equation}\label{equ:Sylvester equation}
AX+XB=AC+DB
\end{equation}
has a unique solution $X\in\mathbb{C}^{m\times n}$ such that
\begin{equation}\label{equ:coarse bound of X}
\Vert X\Vert_F\le \sqrt{\Vert C\Vert_F^2+\Vert D\Vert_F^2}.
\end{equation}
\end{corollary}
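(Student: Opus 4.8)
The plan is to reduce the Sylvester equation \eqref{equ:Sylvester equation} to the form treated in Lemma~\ref{lem:Ren Cang Li's result} by a careful change of variables, and then to exploit positive definiteness to bound the constant $\eta$ from below by $1$. First I would set $\Omega=A$ and $\Gamma=-B$. Since $A$ and $B$ are Hermitian, both $\Omega$ and $\Gamma$ are Hermitian, and with this choice the left-hand side of the lemma's equation becomes $\Omega X-X\Gamma=AX+XB$, which is exactly the left-hand side of \eqref{equ:Sylvester equation}. To match the right-hand side, I would rewrite $AC+DB$ as $\Omega C+(-D)\Gamma$; that is, I would apply the lemma with the data pair $(C,-D)$ in place of $(C,D)$, so that the structured source term reads $\Omega C+(-D)\Gamma=AC+DB$, as desired.

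Next I would verify the hypotheses of Lemma~\ref{lem:Ren Cang Li's result}. Because $A$ is positive definite, every eigenvalue $\omega\in\lambda(\Omega)=\lambda(A)$ is strictly positive; because $B$ is positive definite, every eigenvalue $\gamma\in\lambda(\Gamma)=\lambda(-B)$ is strictly negative. Hence $\lambda(\Omega)\cap\lambda(\Gamma)=\emptyset$, and the lemma yields a unique solution $X$ together with the bound
\begin{equation*}
\Vert X\Vert_F\le\sqrt{\Vert C\Vert_F^2+\Vert{-D}\Vert_F^2}\Big/\eta=\sqrt{\Vert C\Vert_F^2+\Vert D\Vert_F^2}\Big/\eta,
\end{equation*}
where $\eta$ is given by \eqref{equ:defn of eta-Li-Rencang}.

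The crux is then to show that $\eta\ge 1$, after which \eqref{equ:coarse bound of X} follows at once. Writing $\gamma=-\mu$ with $\mu>0$, each quotient appearing in the definition of $\eta$ takes the form
\begin{equation*}
\frac{|\omega-\gamma|}{\sqrt{|\omega|^2+|\gamma|^2}}=\frac{\omega+\mu}{\sqrt{\omega^2+\mu^2}},
\end{equation*}
and since $(\omega+\mu)^2=\omega^2+\mu^2+2\omega\mu\ge\omega^2+\mu^2$ for positive $\omega,\mu$, every such quotient is at least $1$; taking the minimum over the (disjoint) spectra gives $\eta\ge 1$. I do not expect any serious obstacle: the only points requiring genuine care are the sign bookkeeping in the substitution $\Gamma=-B$ and $D\mapsto -D$, which converts the ``$+$'' structure of \eqref{equ:Sylvester equation} into the ``$-$'' structure required by the lemma, and the elementary inequality certifying $\eta\ge 1$. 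Both rest squarely on the strict positive definiteness of $A$ and $B$, which is precisely what guarantees the spectra lie in disjoint half-lines and forces the relevant cross term $2\omega\mu$ to be positive.
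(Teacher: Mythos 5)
Your proposal is correct and follows essentially the same route as the paper: substituting $\Omega=A$, $\Gamma=-B$ into Lemma~\ref{lem:Ren Cang Li's result}, noting the spectra lie in disjoint half-lines, and observing that $\eta\ge 1$. You simply make explicit two details the paper leaves implicit, namely the replacement of $D$ by $-D$ to match the structured source term and the elementary computation certifying $\eta\ge 1$.
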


The proof of Corollary~\ref{cor:Chen-Li-Sun's observation} is easy. Indeed, if we put $\Omega=A$ and $\Gamma=-B$ in Lemma~\ref{cor:Chen-Li-Sun's observation}, then $\lambda(\Omega)\subseteq (0,+\infty)$ and $\lambda(\Gamma)\subseteq (-\infty,0)$, therefore  $\lambda(\Omega)\cap\lambda(\Gamma)=\emptyset$ and the number $\eta$ defined by \eqref{equ:defn of eta-Li-Rencang} is greater than one. Norm upper bound \eqref{equ:coarse bound of X} then follows immediately from \eqref{equ:general F upper bound of Li-Rencang}.

Although norm upper \eqref{equ:coarse bound of X} has the advantage of the simpleness in its form, it usually turns out to be much coarse. For instance, even in the special case that both $A$ and $B$ are identity matrices, this norm upper bound fails to be accurate, since in this case $X=\frac{C+D}{2}$ and
$$\Vert X\Vert_F^2\le \frac{\Vert C+D\Vert_F^2+\Vert C-D\Vert_F^2}{4}=\frac{\Vert C\Vert_F^2+\Vert D\Vert_F^2}{2}\le \Vert C\Vert_F^2+\Vert D\Vert_F^2.$$
This norm upper bound  may also fail to be accurate in another special case that $C=D$, for in this case
$X=C$ is the unique solution to \eqref{equ:Sylvester equation}, whereas this norm upper bound gives the number $\sqrt{2}\,\Vert C\Vert_F$ rather than the exact value $\Vert C\Vert_F$.

The main tools employed in \cite{Chen-Li-Sun,Hong-Meng-Zheng} are the SVD of the associated matrices, together with
the Frobenius norm upper bound \eqref{equ:coarse bound of X} of the unique solution to the structured Sylvester equation \eqref{equ:Sylvester equation}.

The purpose of this paper is to improve norm upper bound \eqref{equ:coarse bound of X}, and then with no use of the SVD to derive
new multiplicative perturbation bounds both for subunitary polar factors and  positive semi-definite polar factors. Recently, a new kind of multiplicative perturbation called the weak perturbation is studied in \cite{Xu-Song-Wang}. It is notable that a weak perturbation may fail to be rank-preserving, so it is somehow complicated to use the SVD to handle weak perturbations. Nevertheless, the method employed in Section~\ref{sec:perturbation bounds for the polar decompodsition} of this paper can still be used to deal with the weak perturbation bounds for the generalized polar decomposition.

The rest of this paper is organized as follows. In Section~\ref{sec:Frobenius norm bounds}, we  study Frobenius norm bounds of the solution $X$ to the structured Sylvester equation \eqref{equ:Sylvester equation}, and obtain new upper bounds \eqref{equ:sharp bound of X-C-D+}, \eqref{equ:sharp bound of X-C-D-a-b-c-positive definite} and \eqref{equ:sharp bound of X-C-D-lambda-mu-positive definite} of $X$.
As an application, in Section~\ref{sec:perturbation bounds for the polar decompodsition}
we study multiplicative perturbation bounds both for subunitary polar factors and  positive semi-definite polar factors.
A systematic improvement is made by using the improved upper bound  instead of upper bound \eqref{equ:coarse bound of X}; see Theorems~\ref{thm:result of subunitary polar factor} and  \ref{thm:result of positive semidefinite polar factor} for the details.

\section{Frobenius norm bounds of the solution to the structured Sylvester equation}\label{sec:Frobenius norm bounds}
In this section, we study Frobenius norm bounds of the solution $X$ to the structured Sylvester equation \eqref{equ:Sylvester equation}.
To begin with, we recall some well-known results on the Frobenius norm of matrices. For any $X\in\mathbb{C}^{m\times n}$ and  $Y\in\mathbb{C}^{n\times m}$, it holds that
\begin{eqnarray*}|tr(XY)|\le \Vert X\Vert_F\,\Vert Y\Vert_F\ \mbox{and}\ \Vert XY \Vert_F \leq \min\{\Vert X \Vert_2\,\Vert Y \Vert_F, \Vert X \Vert_F\,\Vert Y \Vert_2\}.
\end{eqnarray*}
If $P\in \mathbb{C}^{m\times m}$ and $Q\in \mathbb{C}^{n\times n}$ are two orthogonal projections, then for any $M,N\in \mathbb{C}^{m \times n}$, the following equations hold:
\begin{eqnarray}\label{eqn: P and I-P f norm}\Vert PM+(I_m-P)N\Vert^2_F&=&\Vert PM\Vert^2_F+\Vert (I_m-P)N\Vert^2_F,\\
\label{eqn: Q and I-Q f norm}\Vert MQ+N(I_n-Q)\Vert^2_F&=&\Vert MQ\Vert^2_F+\Vert N(I_n-Q)\Vert^2_F.
\end{eqnarray}

\begin{lemma}\label{lem:key technique lemma}\ Suppose that $A\in\mathbb{C}^{m\times m}$ and
$B\in\mathbb{C}^{n\times n}$ are two Hermitian positive semi-definite matrices. Let $C,D\in\mathbb{C}^{m\times n}$ be such that $A^\dag A C=C$ and $DBB^\dag=D$. If $X\in \mathbb{C}^{m\times n}$ is a solution to \eqref{equ:Sylvester equation}
such that $A^\dag A X=X=XBB^\dag$. Then
\begin{equation}\label{equ:F norm of D-C+}\begin{split}\Vert D-C\Vert_F^2=\Vert D-X\Vert_F^2&+\Vert X-C\Vert_F^2+\Big\Vert\, A^\frac12(X-C)(B^\dag)^\frac12\Big\Vert_F^2\\
&+\Big\Vert (A^\dag)^\frac12 (D-X)B^\frac12\Big\Vert_F^2.\end{split}
\end{equation}
\end{lemma}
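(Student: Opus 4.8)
The plan is to reduce everything to a single coupling identity between the two ``error'' matrices. Write $E=X-C$ and $F=D-X$, so that $D-C=E+F$, and note that $A^\dag A=AA^\dag$ and $BB^\dag=B^\dag B$ are the orthogonal projections onto $\mathcal{R}(A)$ and $\mathcal{R}(B)$ (Hermiticity of $A,B$ makes the two forms coincide). The hypotheses then collapse to $A^\dag A\,E=E$ and $F\,BB^\dag=F$. First I would rearrange the Sylvester equation \eqref{equ:Sylvester equation}: subtracting $AC+DB$ from $AX+XB$ and regrouping gives the fundamental relation
\begin{equation*}
A(X-C)=(D-X)B,\qquad\text{that is}\qquad AE=FB .
\end{equation*}
Left-multiplying this by $A^\dag$ and invoking $A^\dag A\,E=E$ produces the companion identity $E=A^\dag F B$, which will be the crux of the argument.

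Next I would expand the left-hand side through the Frobenius inner product:
\begin{equation*}
\Vert D-C\Vert_F^2=\Vert E+F\Vert_F^2=\Vert E\Vert_F^2+\Vert F\Vert_F^2+2\,\mathrm{Re}\,tr(F^*E).
\end{equation*}
Since $\Vert E\Vert_F^2=\Vert X-C\Vert_F^2$ and $\Vert F\Vert_F^2=\Vert D-X\Vert_F^2$ already match two of the terms on the right of \eqref{equ:F norm of D-C+}, the whole claim reduces to showing that the cross term $2\,\mathrm{Re}\,tr(F^*E)$ equals the sum of the two weighted norms appearing there.

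I would dispatch those two weighted norms by rewriting each as a trace and then collapsing it using $AE=FB$ and the projection identities. For the first, $\Vert A^\frac12 E(B^\dag)^\frac12\Vert_F^2=tr(B^\dag E^*AE)$; substituting $AE=FB$ gives $tr(B^\dag E^*FB)=tr(BB^\dag E^*F)$, and moving $BB^\dag$ to the end while invoking $FBB^\dag=F$ collapses this to $tr(E^*F)$. For the second, $\Vert(A^\dag)^\frac12 F B^\frac12\Vert_F^2=tr(BF^*A^\dag F)$; here the companion identity $E=A^\dag FB$ yields $E^*=BF^*A^\dag$, so this trace is again exactly $tr(E^*F)$. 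Because each quantity is manifestly a nonnegative real number, $tr(E^*F)$ is real, whence $2\,tr(E^*F)=2\,\mathrm{Re}\,tr(E^*F)=2\,\mathrm{Re}\,tr(F^*E)$, which is precisely the cross term above; adding the pieces gives \eqref{equ:F norm of D-C+}.

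The individual computations are short, so the main obstacle is the careful bookkeeping with the projections $A^\dag A$, $BB^\dag$ and with the pseudoinverse square roots. In particular, I must be sure to invoke the hypotheses $A^\dag A\,E=E$ and $FBB^\dag=F$ at exactly the moments when the stray projection factors need to be discarded, and I must recognize that left-multiplying $AE=FB$ by $A^\dag$ is what forces the $A^\dag$-weighted term to collapse to the same real trace $tr(E^*F)$ as the $A$-weighted term; that coincidence is what makes the two weighted norms sum to the single cross term.
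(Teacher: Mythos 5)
Your proof is correct and follows essentially the same route as the paper's: both hinge on extracting $A(X-C)=(D-X)B$ from the Sylvester equation, converting the two weighted Frobenius norms into traces, and discarding the stray projections via the hypotheses $A^\dag A(X-C)=X-C$ and $(D-X)BB^\dag=D-X$. The only difference is bookkeeping — you identify both weighted norms with the single real cross term $tr\big((X-C)^*(D-X)\big)$ in the expansion of $\Vert (X-C)+(D-X)\Vert_F^2$, whereas the paper expands each trace through $D-C$ before summing — and the two organizations are trivially equivalent.
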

\begin{proof}By assumption, we have
\begin{equation}\label{equ:two assumptions on C D X}
A^\dag A(X-C)=X-C\ \mbox{and}\ (D-X)BB^\dag=D-X,
\end{equation}
which is combined with \eqref{equ:Sylvester equation} to get
\begin{eqnarray*}
(X-C)^*\cdot A(X-C)\cdot B^\dag&=&(X-C)^*\cdot (D-X)B\cdot B^\dag\\
&=&(X-C)^*\cdot (D-X)\\
&=&(X-C)^*(D-C)-(X-C)^*(X-C).
\end{eqnarray*}
It follows that
\begin{eqnarray}
\Vert A^\frac12(X-C)\big(B^\dag\big)^\frac12\Vert_F^2
&=&tr\left[\big[A^\frac12(X-C)\big(B^\dag\big)^\frac12\big]^*\big[A^\frac12(X-C)\big(B^\dag\big)^\frac12\big]\right]\nonumber\\
&=&tr\left[\big(B^\dag\big)^\frac12\cdot (X-C)^*A(X-C)\big(B^\dag\big)^\frac12\right]\nonumber\\
&=&tr\left[(X-C)^*A(X-C)\big(B^\dag\big)^\frac12\cdot \big(B^\dag\big)^\frac12\right]\nonumber\\
&=&tr\left[(X-C)^*\cdot A(X-C)\cdot B^\dag\right]\nonumber\\
\label{eqn:1st equality}&=&tr\left[(X-C)^*(D-C)\right]-\Vert X-C\Vert_F^2.
\end{eqnarray}
Similarly, from \eqref{equ:Sylvester equation} and \eqref{equ:two assumptions on C D X} we can get
\begin{equation*}A^\dag (D-X)B(D-X)^*=(X-C)(D-X)^*,
\end{equation*}
and thus
\begin{eqnarray}\Vert \big(A^\dag\big)^\frac12 (D-X)B^\frac12\Vert_F^2&=&tr\left[A^\dag (D-X)B(D-X)^*\right]\nonumber\\
&=&tr\left[(X-C)(D-X)^*\right]\nonumber\\
&=&tr\left[(D-X)^*(X-C)\right]\nonumber\\
&=&tr\left[(D-X)^*\big(D-C-(D-X)\big)\right]\nonumber\\
\label{eqn:2nd equality}&=&tr\left[(D-X)^*(D-C)\right]-\Vert D-X\Vert_F^2.
\end{eqnarray}
Since \begin{eqnarray*}\Vert D-C\Vert_F^2&=&tr\left[(D-C)^*(D-C)\right]\\
&=&tr\left[(D-X)^*(D-C)\right]+tr\left[(X-C)^*(D-C)\right],
\end{eqnarray*}
the desired equation follows immediately from \eqref{eqn:1st equality} and \eqref{eqn:2nd equality}.
\end{proof}

Now we provide a technique result of this section as follows:
\begin{theorem}\label{thm:main result-lambda1-lambda2-a-b-c+} Let $A\in\mathbb{C}^{m\times m}$ and $B\in\mathbb{C}^{n\times n}$ be two non-zero Hermitian positive semi-definite matrices, and let $C, D\in\mathbb{C}^{m\times n}$ be such that $A^{\dag}AC=C$ and $DBB^{\dag}=D$. If $X\in\mathbb{C}^{m\times n}$ is a solution to \eqref{equ:Sylvester equation} such that $A^{\dag}AX=X=XBB^{\dag}$. Then
\begin{equation}\label{equ:sharp bound of X-C-D+}
\frac{\Vert C+D\Vert_F-\Vert C-D\Vert_F}{2}\leq \Vert X\Vert_F
\leq\frac{\Vert C+D\Vert_F+\Vert C-D\Vert_F}{2}.
\end{equation}
If furthermore $CBB^{\dag}=C$ and $A^{\dag}AD=D$, then
 \begin{equation}\label{equ:sharp bound of X-C-D-a-b-c-positive definite}
 \frac{\Vert aC+bD\Vert_F-c\Vert C-D\Vert_F}{a+b}\le \Vert X\Vert_F\le \frac{\Vert aC+bD\Vert_F+c\Vert C-D\Vert_F}{a+b},\\
 \end{equation}
where
\begin{eqnarray}\label{eqn:defn of lambda1-lambda2}
&&\lambda_1=\Vert A^{\dag}\Vert_2\cdot\Vert B\Vert_2,\ \,\lambda_2=\Vert A\Vert_2\cdot\Vert B^{\dag}\Vert_2,\\
\label{eqn:defn of a-b-c}&& a=1+\frac{1}{\lambda_1},\ \,b=1+\frac{1}{\lambda_2}\ \,\mbox{and}\ \,c=\sqrt{1-\frac{1}{\lambda_1\lambda_2}}.
\end{eqnarray}
\end{theorem}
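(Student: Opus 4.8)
The plan is to deduce both inequality chains from a single estimate measuring how far $X$ sits from a suitable combination of $C$ and $D$, and to control that distance through the identity of Lemma~\ref{lem:key technique lemma}. Throughout I would write $u=X-C$ and $v=D-X$, so that $u+v=D-C$, and set $p=tr[u^*v]=tr\big[(X-C)^*(D-X)\big]$. Inspecting the proof of Lemma~\ref{lem:key technique lemma}, the number $p$ is exactly the third summand on the right of \eqref{equ:F norm of D-C+}, namely $p=\big\Vert A^{\frac12}(X-C)(B^\dag)^{\frac12}\big\Vert_F^2$, while its conjugate $tr[v^*u]$ equals the fourth summand $\big\Vert (A^\dag)^{\frac12}(D-X)B^{\frac12}\big\Vert_F^2$. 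Being norms squared, both are real and nonnegative; being conjugates, they coincide. Hence $p\ge0$ and $tr[v^*u]=p$.

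I would first establish \eqref{equ:sharp bound of X-C-D+}. Since $2X-C-D=u-v$ and $C-D=-(u+v)$, one has $\Vert 2X-C-D\Vert_F^2=\Vert u\Vert_F^2+\Vert v\Vert_F^2-2p$ and $\Vert C-D\Vert_F^2=\Vert u\Vert_F^2+\Vert v\Vert_F^2+2p$; as $p\ge0$ this already gives $\Vert 2X-C-D\Vert_F\le\Vert C-D\Vert_F$. Writing $2X=(C+D)+(2X-C-D)$ and applying the triangle inequality in its forward and reverse forms yields $\big|\,2\Vert X\Vert_F-\Vert C+D\Vert_F\,\big|\le\Vert 2X-C-D\Vert_F\le\Vert C-D\Vert_F$, which is \eqref{equ:sharp bound of X-C-D+} after division by $2$. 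Only $p\ge0$ is used here, so the extra hypotheses are not yet needed.

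For \eqref{equ:sharp bound of X-C-D-a-b-c-positive definite} I would prove the analogue $\Vert(a+b)X-aC-bD\Vert_F\le c\,\Vert C-D\Vert_F$ and then repeat the triangle-inequality argument with $(a+b)X=(aC+bD)+\big((a+b)X-aC-bD\big)$. Since $(a+b)X-aC-bD=au-bv$, squaring reduces the target to
$$(a^2-c^2)\Vert u\Vert_F^2+(b^2-c^2)\Vert v\Vert_F^2\le 2(ab+c^2)\,p.$$
This is where the added hypotheses $CBB^\dag=C$ and $A^\dag AD=D$ are indispensable: combined with the standing assumptions they force $A^\dag Au=u=uBB^\dag$ and $A^\dag Av=v=vBB^\dag$, so both $u$ and $v$ are fixed by the left and right compressions $P_{\mathcal{R}(A)}=A^\dag A$ and $P_{\mathcal{R}(B)}=BB^\dag$. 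Using $(A^\dag)^{\frac12}A^{\frac12}=A^{\frac12}(A^\dag)^{\frac12}=P_{\mathcal{R}(A)}$ and the analogue for $B$, I would reconstruct $u=(A^\dag)^{\frac12}\big[A^{\frac12}u(B^\dag)^{\frac12}\big]B^{\frac12}$ and $v=A^{\frac12}\big[(A^\dag)^{\frac12}vB^{\frac12}\big](B^\dag)^{\frac12}$; bounding the outer factors in spectral norm via $\Vert GHK\Vert_F\le\Vert G\Vert_2\Vert H\Vert_F\Vert K\Vert_2$ then gives $\Vert u\Vert_F^2\le\lambda_1 p$ and $\Vert v\Vert_F^2\le\lambda_2 p$. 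I expect this reconstruction step — recovering $u,v$ from the weighted summands of \eqref{equ:F norm of D-C+} and thereby turning those two Frobenius terms into the bounds $\Vert u\Vert_F^2\le\lambda_1p$, $\Vert v\Vert_F^2\le\lambda_2p$ — to be the main obstacle, since it is precisely where the new hypotheses are consumed and the constants $\lambda_1,\lambda_2$ are forced.

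To finish, each of $\Vert A\Vert_2\Vert A^\dag\Vert_2$ and $\Vert B\Vert_2\Vert B^\dag\Vert_2$ is at least $1$ for a non-zero positive semi-definite matrix, so $\lambda_1\lambda_2\ge1$ and $c^2=1-\frac{1}{\lambda_1\lambda_2}\in[0,1]$; consequently $a^2-c^2\ge0$ and $b^2-c^2\ge0$, and the estimates $\Vert u\Vert_F^2\le\lambda_1p$, $\Vert v\Vert_F^2\le\lambda_2p$ may be inserted to bound the left-hand side above by $\big[(a^2-c^2)\lambda_1+(b^2-c^2)\lambda_2\big]p$. A direct computation from \eqref{eqn:defn of lambda1-lambda2}--\eqref{eqn:defn of a-b-c} yields the exact identity $(a^2-c^2)\lambda_1+(b^2-c^2)\lambda_2=2(ab+c^2)$, so this bound is precisely $2(ab+c^2)p$, which closes the reduced inequality and hence establishes \eqref{equ:sharp bound of X-C-D-a-b-c-positive definite}.
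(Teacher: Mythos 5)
Your proof is correct, and it rests on the same foundation as the paper's: Lemma~\ref{lem:key technique lemma} supplies the key identity \eqref{equ:F norm of D-C+}, and your estimates $\Vert u\Vert_F^2\le\lambda_1 p$ and $\Vert v\Vert_F^2\le\lambda_2 p$ are exactly the paper's inequalities \eqref{eqn:half ineqiality-1} and \eqref{eqn:half ineqiality-2} read in the other direction. Where you genuinely depart from the paper is in the endgame. The paper expands $a\Vert X-C\Vert_F^2+b\Vert D-X\Vert_F^2\le\Vert D-C\Vert_F^2$, applies Cauchy--Schwarz to $\mathrm{Re}\langle aC+bD,X\rangle$, and solves the resulting scalar quadratic \eqref{equ:inequality of X-C-D-a-b-c+} in $\Vert X\Vert_F$, with a discriminant computation producing $c\Vert C-D\Vert_F$. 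You instead make explicit that the two weighted summands of \eqref{equ:F norm of D-C+} both equal the cross term $p=tr\big[(X-C)^*(D-X)\big]$ (hence $p$ is real and nonnegative), and prove the ``centered'' inequalities
\begin{equation*}
\big\Vert 2X-(C+D)\big\Vert_F\le\Vert C-D\Vert_F,\qquad \big\Vert (a+b)X-(aC+bD)\big\Vert_F\le c\,\Vert C-D\Vert_F,
\end{equation*}
from which \eqref{equ:sharp bound of X-C-D+} and \eqref{equ:sharp bound of X-C-D-a-b-c-positive definite} follow by the forward and reverse triangle inequalities. This buys two things: the intermediate ball-type inequalities are formally stronger statements than the scalar two-sided bounds (Cauchy--Schwarz is deferred to the very last step), and the verification reduces to the clean algebraic identity $(a^2-c^2)\lambda_1+(b^2-c^2)\lambda_2=2(ab+c^2)$, which I checked and which does hold; together with $a,b\ge1\ge c$ and $p\ge0$ this closes the argument. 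The paper's route, by contrast, needs no identification of the cross term and transfers verbatim to Theorem~\ref{thm:main result-lambda-mu}. Both proofs are complete and correct.
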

\begin{proof} Let  $\langle \cdot,\cdot\rangle$ be the inner product on $\mathbb{C}^{m\times n}$ defined by
\begin{equation*}\langle U, V\rangle=tr(UV^*)\ \mbox{for any $U,V\in \mathbb{C}^{m\times n}$}.\end{equation*}
Then
$$\Vert U\Vert_F=\sqrt{\langle U,U\rangle}\ \mbox{for any $U\in \mathbb{C}^{m\times n}$},$$
hence
\begin{eqnarray}\label{eqn:parallel sum square}\Vert C+D\Vert_F^2+\Vert C-D\Vert_F^2&=&2\Vert C\Vert_F^2+2\Vert D\Vert_F^2,\\
\label{eqn:anti parallel sum square}\frac{\Vert C+D\Vert_F^2-\Vert C-D\Vert_F^2}{4}&=&\mbox{Re}\left(\langle C,D\rangle\right),
\end{eqnarray}
where $\mbox{Re}\left(\langle C,D\rangle\right)$ denotes the real part of $\langle C,D\rangle$.

 Firstly, we prove inequalities in \eqref{equ:sharp bound of X-C-D+}. By Lemma~\ref{lem:key technique lemma} we know that \eqref{equ:F norm of D-C+} is satisfied, which leads obviously to
 $$\Vert X-C\Vert_F^2+\Vert D-X\Vert_F^2\leq\Vert D-C\Vert_F^2;$$
 or equivalently,
 $$\langle X-C,X-C\rangle+\langle D-X,D-X\rangle\leq\langle D-C,D-C\rangle,$$
which can be simplified to
\begin{equation*}
\Vert X\Vert_F^2-\mbox{Re}\left(\langle C+D,X\rangle\right)+\mbox{Re}\left(\langle C,D\rangle\right)\leq 0,
\end{equation*}
and thus
\begin{equation}\label{equ:inequality of X-C-D}
\Vert X\Vert_F^2-\Vert C+D\Vert_F \cdot \Vert X\Vert_F+\mbox{Re}\left(\langle C,D\rangle\right)\leq 0,
\end{equation}
since $\mbox{Re}\left(\langle C+D,X\rangle\right)\leq\Vert C+D\Vert_F\cdot\Vert X\Vert_F$.
Then by \eqref{eqn:anti parallel sum square} and \eqref{equ:inequality of X-C-D}, we obtain
\begin{equation*}
\Vert X\Vert_F^2-\Vert C+D\Vert_F\cdot\Vert X\Vert_F+
\frac{\Vert C+D\Vert_F^2-\Vert C-D\Vert_F^2}{4}\leq 0,
\end{equation*}
which clearly gives \eqref{equ:sharp bound of X-C-D+}.

Secondly, we prove inequalities in \eqref{equ:sharp bound of X-C-D-a-b-c-positive definite}. Since $A$ and $B$ are Hermitian positive semi-definite, we have
\begin{eqnarray*}(A^\dag)^\frac12&=&(A^\frac12)^\dag\ \mbox{and}\ A^\frac12(A^\frac12)^\dag=(A^\frac12)^\dag A^\frac12=A^\dag A=AA^\dag,\\
(B^\dag)^\frac12&=&(B^\frac12)^\dag\ \mbox{and}\ B^\frac12(B^\frac12)^\dag=(B^\frac12)^\dag B^\frac12=B^\dag B=BB^\dag.
\end{eqnarray*}
If in addition $CBB^{\dag}=C$ and $A^{\dag}AD=D$, then
\begin{eqnarray*}\Vert X-C\Vert_F&=&\left\Vert (A^{\dag})^\frac{1}{2}\left[A^\frac12 (X-C)(B^{\dag})^\frac{1}{2}\right]B^\frac12\right\Vert_F\\
&\le&\big\Vert (A^{\dag})^\frac{1}{2}\big\Vert_2\cdot\big\Vert B^\frac12\big\Vert_2\cdot \big\Vert A^\frac12 (X-C)(B^{\dag})^\frac{1}{2}\big\Vert_F,
\end{eqnarray*}
which leads to
\begin{equation}\label{eqn:half ineqiality-1} \big\Vert A^\frac12 (X-C)(B^{\dag})^\frac{1}{2}\big\Vert_F^2\ge \frac{\Vert X-C\Vert_F^2}{\big\Vert (A^{\dag})^\frac{1}{2}\big\Vert_2^2\cdot\big\Vert B^\frac12\big\Vert_2^2}=\frac{\Vert X-C\Vert_F^2}{\lambda_1}.\end{equation}
Similarly,
\begin{equation}\label{eqn:half ineqiality-2}\big\Vert (A^{\dag})^\frac{1}{2} (D-X)B^{\frac12}\big\Vert_F^2\ge \frac{\Vert D-X\Vert_F^2}{\big\Vert A^{\frac12}\big\Vert_2^2\cdot\big\Vert (B^{\dag})^\frac{1}{2}\big\Vert_2^2}=\frac{\Vert D-X\Vert_F^2}{\lambda_2}.
\end{equation}

Let $a$ and $b$  be defined by \eqref{eqn:defn of a-b-c}.  Note that
\begin{eqnarray*}\lambda_1\lambda_2=\left(\big\Vert A^{\dag}\big\Vert_2\cdot\big\Vert A\big\Vert_2\right)\cdot \left(\big\Vert B\big\Vert_2\cdot\big\Vert B^{\dag}\big\Vert_2\right)\ge\Vert A^{\dag}A\Vert_2\cdot \Vert B B^{\dag}\Vert_2=1,
\end{eqnarray*}
so we have
$$a+b-ab=1-\frac{1}{\lambda_1\lambda_2}\ge 0,$$
which indicates that the number $c$ is well-defined such that $c=\sqrt{a+b-ab}$.
By \eqref{equ:F norm of D-C+}, \eqref{eqn:half ineqiality-1}, \eqref{eqn:half ineqiality-2} and \eqref{eqn:defn of a-b-c}, we obtain
\begin{equation}\label{equ:inequality of X-C-D-11}a\Vert X-C\Vert_F^2+b\Vert D-X\Vert_F^2\le \Vert D-C\Vert_F^2.\end{equation}
Using the same technique as in the derivation of \eqref{equ:inequality of X-C-D}, from \eqref{equ:inequality of X-C-D-11} we can get

\begin{equation}\label{equ:inequality of X-C-D-a-b-c+}
(a+b)\cdot\Vert X\Vert_F^2-2\Vert aC+bD\Vert_F\cdot \Vert X\Vert_F+d\le 0,
\end{equation}
where
\begin{equation*}d=(a-1)\cdot\Vert C\Vert_F^2+(b-1)\cdot\Vert D\Vert_F^2+2\mbox{Re}\left(\langle C,D\rangle\right).\end{equation*}
It follows from \eqref{equ:inequality of X-C-D-a-b-c+} that
\begin{equation}\label{equ:two inequalities with delta}\frac{\Vert aC+bD\Vert_F-\sqrt{e}}{a+b}\le \Vert X\Vert_F\le \frac{\Vert aC+bD\Vert_F+\sqrt{e}}{a+b},
\end{equation}
where
\begin{eqnarray}e&=&\Vert aC+bD\Vert_F^2-(a+b)d=a^2\Vert C\Vert_F^2+b^2\Vert D\Vert_F^2+2ab\,\mbox{Re}\langle C,D\rangle-(a+b)d\nonumber\\
&=&(a+b-ab)\left[\Vert C\Vert_F^2+\Vert D\Vert_F^2-2\,\mbox{Re}\left(\langle C,D\rangle\right)\right]\nonumber\\
 \label{eqn:concrete expression of e}&=&(a+b-ab)\cdot\Vert C-D\Vert_F^2 \ \mbox{by \eqref{eqn:parallel sum square} and \eqref{eqn:anti parallel sum square}}.
\end{eqnarray}
Substituting \eqref{eqn:concrete expression of e} into \eqref{equ:two inequalities with delta} yields \eqref{equ:sharp bound of X-C-D-a-b-c-positive definite}.
\end{proof}

\begin{theorem}\label{thm:main result-lambda-mu} Let $A\in\mathbb{C}^{m\times m}$ and $B\in\mathbb{C}^{n\times n}$ be two non-zero Hermitian positive semi-definite matrices, and let $C, D\in\mathbb{C}^{m\times n}$ be such that
 \begin{equation*}A^{\dag}AC=C=CBB^\dag\ \mbox{and}\ A^{\dag}AD=D=DBB^{\dag}.\end{equation*}
 If $X\in\mathbb{C}^{m\times n}$ is a solution to \eqref{equ:Sylvester equation} such that $A^{\dag}AX=X=XBB^{\dag}$. Then
\begin{eqnarray}\label{equ:sharp bound of X-C-D-lambda-mu-positive definite}
\frac{\Vert C+D\Vert_F-\mu\Vert C-D\Vert_F}{2}\leq\Vert X\Vert_F\leq\frac{\Vert C+D\Vert_F+\mu\Vert C-D\Vert_F}{2},
\end{eqnarray}
where
\begin{eqnarray}\label{eqn:defn of lambda-mu}
\lambda=\max\left\{\big\Vert A^{\dag}\big\Vert_2\cdot\big\Vert B\big\Vert_2,\big\Vert A\big\Vert_2\cdot\big\Vert B^{\dag}\big\Vert_2\right\}\ \mbox{and}\ \mu=\sqrt{\frac{\lambda-1}{\lambda+1}}.
\end{eqnarray}
\end{theorem}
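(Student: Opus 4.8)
The plan is to retrace the argument of Theorem~\ref{thm:main result-lambda1-lambda2-a-b-c+}, but to collapse the two constants $\lambda_1$ and $\lambda_2$ into the single quantity $\lambda=\max\{\lambda_1,\lambda_2\}$ so that the resulting quadratic inequality in $\Vert X\Vert_F$ becomes symmetric in $C$ and $D$ and produces the single factor $\mu$. The hypotheses here, namely $A^{\dag}AC=C=CBB^{\dag}$ and $A^{\dag}AD=D=DBB^{\dag}$, coincide exactly with the ``furthermore'' hypotheses of Theorem~\ref{thm:main result-lambda1-lambda2-a-b-c+}, so the two lower estimates \eqref{eqn:half ineqiality-1} and \eqref{eqn:half ineqiality-2} are available. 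Since $\lambda\ge\lambda_1$ and $\lambda\ge\lambda_2$ give $\frac{1}{\lambda_1}\ge\frac{1}{\lambda}$ and $\frac{1}{\lambda_2}\ge\frac{1}{\lambda}$, I would weaken both estimates to the common form
\begin{equation*}
\big\Vert A^\frac12(X-C)(B^{\dag})^\frac12\big\Vert_F^2\ge\frac{\Vert X-C\Vert_F^2}{\lambda}\quad\mbox{and}\quad \big\Vert (A^{\dag})^\frac12(D-X)B^\frac12\big\Vert_F^2\ge\frac{\Vert D-X\Vert_F^2}{\lambda}.
\end{equation*}

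Next I would substitute these into the identity \eqref{equ:F norm of D-C+} of Lemma~\ref{lem:key technique lemma}, keeping the full identity and bounding only its last two (nonnegative) terms from below. This yields the single clean inequality
\begin{equation*}
\Big(1+\frac{1}{\lambda}\Big)\big(\Vert X-C\Vert_F^2+\Vert D-X\Vert_F^2\big)\le\Vert D-C\Vert_F^2,
\end{equation*}
which plays the role that \eqref{equ:inequality of X-C-D-11} played before, now with the common weight $\nu:=1+\frac{1}{\lambda}=\frac{\lambda+1}{\lambda}$ on both square terms.

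The final step reuses verbatim the quadratic-inequality technique already employed for \eqref{equ:inequality of X-C-D}. Expanding the left-hand side through the inner product $\langle\cdot,\cdot\rangle$ and invoking $\mbox{Re}\langle C+D,X\rangle\le\Vert C+D\Vert_F\cdot\Vert X\Vert_F$, I would reduce the displayed inequality to
\begin{equation*}
\Vert X\Vert_F^2-\Vert C+D\Vert_F\cdot\Vert X\Vert_F+\frac{(\nu-1)\big(\Vert C\Vert_F^2+\Vert D\Vert_F^2\big)+2\,\mbox{Re}\langle C,D\rangle}{2\nu}\le 0.
\end{equation*}
The one genuine computation is to simplify the constant term. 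Substituting $\nu-1=\frac{1}{\lambda}$ and replacing $\Vert C\Vert_F^2+\Vert D\Vert_F^2$ and $\mbox{Re}\langle C,D\rangle$ via \eqref{eqn:parallel sum square} and \eqref{eqn:anti parallel sum square}, the numerator becomes $\frac{(1+\lambda)\Vert C+D\Vert_F^2-(\lambda-1)\Vert C-D\Vert_F^2}{2\lambda}$, and multiplying by $\frac{1}{2\nu}=\frac{\lambda}{2(\lambda+1)}$ collapses the constant to $\frac{\Vert C+D\Vert_F^2-\mu^2\Vert C-D\Vert_F^2}{4}$, where $\mu^2=\frac{\lambda-1}{\lambda+1}$ as in \eqref{eqn:defn of lambda-mu}. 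The discriminant of the quadratic is then exactly $\mu^2\Vert C-D\Vert_F^2$, a perfect square, and solving for $\Vert X\Vert_F$ delivers the two roots $\frac{\Vert C+D\Vert_F\pm\mu\Vert C-D\Vert_F}{2}$, which is precisely \eqref{equ:sharp bound of X-C-D-lambda-mu-positive definite}.

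I expect the only delicate point to be this constant-term simplification: one must carefully track the weights $\frac{1+\lambda}{\lambda}$ and $\frac{\lambda-1}{\lambda}$ attached to $\Vert C+D\Vert_F^2$ and $\Vert C-D\Vert_F^2$ and verify that the factor $\frac{\lambda}{2(\lambda+1)}$ turns them into $\frac14$ and $-\frac{\mu^2}{4}$ respectively. Everything else is a direct transcription of the argument for Theorem~\ref{thm:main result-lambda1-lambda2-a-b-c+}, together with the trivial observation that replacing $\lambda_1,\lambda_2$ by their maximum $\lambda$ only weakens the estimates.
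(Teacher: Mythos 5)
Your proposal is correct and follows essentially the same route as the paper: the paper's own proof simply invokes the argument of Theorem~\ref{thm:main result-lambda1-lambda2-a-b-c+} with $\lambda_1,\lambda_2$ both replaced by $\lambda=\max\{\lambda_1,\lambda_2\}$, i.e.\ $a=b=1+\frac{1}{\lambda}$ and $c=\sqrt{1-\frac{1}{\lambda^2}}$, which is exactly the specialization you carry out explicitly. Your constant-term simplification to $\frac{\Vert C+D\Vert_F^2-\mu^2\Vert C-D\Vert_F^2}{4}$ checks out, and the resulting quadratic yields precisely \eqref{equ:sharp bound of X-C-D-lambda-mu-positive definite}.
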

\begin{proof}Following the notations as in the proof of Theorem~\ref{thm:main result-lambda1-lambda2-a-b-c+}, we have
$\lambda=\max\{\lambda_1,\lambda_2\}$, therefore from the proof of Theorem~\ref{thm:main result-lambda1-lambda2-a-b-c+} we know that
 \eqref{equ:sharp bound of X-C-D-a-b-c-positive definite} is satisfied if $a,b$ and $c$ therein be replaced by
\begin{equation*}a=b=1+\frac{1}{\lambda}\ \mbox{and}\ c=\sqrt{a+b-ab}=\sqrt{1-\frac{1}{\lambda^2}}.\end{equation*}
The conclusion then follows from \eqref{eqn:defn of lambda-mu}.
\end{proof}

When applied to the Hermitian positive definite matrices, a corollary can be derived directly as follows:

\begin{corollary}\label{cor:main result-lambda1-lambda2-a-b-c+} Let $A\in\mathbb{C}^{m\times m}$ and $B\in\mathbb{C}^{n\times n}$ be two Hermitian positive definite matrices. Then for any $C, D\in\mathbb{C}^{m\times n}$, there exists a unique solution $X\in\mathbb{C}^{m\times n}$ to \eqref{equ:Sylvester equation} such that
\begin{eqnarray*}\frac{\Vert aC+bD\Vert_F-c\Vert C-D\Vert_F}{a+b}\le &\Vert X\Vert_F &\le\frac{\Vert aC+bD\Vert_F+c\Vert C-D\Vert_F}{a+b},\\
 \frac{\Vert C+D\Vert_F-\mu\Vert C-D\Vert_F}{2}\le &\Vert X\Vert_F&\le\frac{\Vert C+D\Vert_F+\mu\Vert C-D\Vert_F}{2}, \end{eqnarray*}
where
\begin{eqnarray*}
&&\lambda_1=\Vert A^{-1}\Vert_2\cdot\Vert B\Vert_2,\lambda_2=\Vert A\Vert_2\cdot\Vert B^{-1}\Vert_2, \\
&&\lambda=\max\{\lambda_1,\lambda_2\},\mu=\sqrt{\frac{\lambda-1}{\lambda+1}},\\
&& a=1+\frac{1}{\lambda_1},\ \,b=1+\frac{1}{\lambda_2}, c=\sqrt{1-\frac{1}{\lambda_1\lambda_2}}.
\end{eqnarray*}
\end{corollary}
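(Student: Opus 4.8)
The plan is to obtain the corollary as a direct specialization of Theorems~\ref{thm:main result-lambda1-lambda2-a-b-c+} and~\ref{thm:main result-lambda-mu}, exploiting the fact that a Hermitian positive definite matrix is invertible, so that its Moore--Penrose inverse coincides with its ordinary inverse and the associated orthogonal projections reduce to identity matrices.

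First I would settle existence and uniqueness, which is the one ingredient not already contained in the two theorems (both of which merely assume a solution $X$ to be given). Putting $\Omega=A$ and $\Gamma=-B$ rewrites \eqref{equ:Sylvester equation} in the form $\Omega X-X\Gamma=\Omega C-D\Gamma$, and since $A$ and $B$ are positive definite we have $\lambda(\Omega)\subseteq(0,+\infty)$ and $\lambda(\Gamma)\subseteq(-\infty,0)$. These spectra are disjoint, so by the classical Sylvester criterion recalled in the Introduction (the same observation underlying Corollary~\ref{cor:Chen-Li-Sun's observation}) there is a unique $X\in\mathbb{C}^{m\times n}$ solving \eqref{equ:Sylvester equation}.

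Next I would verify that the structural hypotheses of the two theorems are met automatically. Since $A$ is positive definite, $A^{\dag}=A^{-1}$ and hence $A^{\dag}A=I_m$; since $B$ is positive definite, $B^{\dag}=B^{-1}$ and hence $BB^{\dag}=I_n$. Consequently every condition of the form $A^{\dag}AM=M$ or $MBB^{\dag}=M$ collapses to the trivial identity $M=M$, so it holds for each of $C$, $D$ and the solution $X$; in particular the full set of hypotheses required by both Theorem~\ref{thm:main result-lambda1-lambda2-a-b-c+} and Theorem~\ref{thm:main result-lambda-mu} is satisfied. Moreover $\Vert A^{\dag}\Vert_2=\Vert A^{-1}\Vert_2$ and $\Vert B^{\dag}\Vert_2=\Vert B^{-1}\Vert_2$, so the quantities $\lambda_1,\lambda_2,\lambda,a,b,c,\mu$ occurring in the theorems coincide with those defined in the corollary.

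With these two points in place, the first displayed pair of inequalities follows from \eqref{equ:sharp bound of X-C-D-a-b-c-positive definite} in Theorem~\ref{thm:main result-lambda1-lambda2-a-b-c+}, and the second from \eqref{equ:sharp bound of X-C-D-lambda-mu-positive definite} in Theorem~\ref{thm:main result-lambda-mu}. I do not anticipate any genuine obstacle here: the entire content is the reduction $A^{\dag}A=I_m$ and $BB^{\dag}=I_n$, which trivializes the projection constraints, and the only step demanding separate care is the existence-and-uniqueness assertion, since the cited theorems take the solution as given rather than producing it.
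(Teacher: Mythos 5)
Your proposal is correct and follows essentially the same route as the paper: existence and uniqueness come from the classical Sylvester criterion (Lemma~\ref{lem:Ren Cang Li's result} with $\Omega=A$, $\Gamma=-B$), and the two bound pairs are direct specializations of Theorems~\ref{thm:main result-lambda1-lambda2-a-b-c+} and~\ref{thm:main result-lambda-mu}, with the projection hypotheses trivialized by $A^{\dag}A=I_m$ and $BB^{\dag}=I_n$. Your write-up merely spells out in more detail the reductions the paper leaves implicit.
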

\begin{proof}The existence and uniqueness of the solution $X$ to \eqref{equ:Sylvester equation} follow from Lemma~\ref{lem:Ren Cang Li's result}. The rest part of the assertion follows immediately from Theorems~\ref{thm:main result-lambda1-lambda2-a-b-c+} and \ref{thm:main result-lambda-mu}.
\end{proof}

\begin{remark}\label{rem:accurate explanation}It is notable that upper bounds \eqref{equ:sharp bound of X-C-D-a-b-c-positive definite} and  \eqref{equ:sharp bound of X-C-D-lambda-mu-positive definite} are accurate in the case that $C=D$.  Furthermore,
upper bound \eqref{equ:sharp bound of X-C-D-a-b-c-positive definite} is also accurate if $A=k_1I_m$ and $B=k_2I_n$ for any $m,n\in\mathbb{N}$ and $k_1,k_2\in (0,+\infty)$. Indeed, in this case we have
\begin{equation*}a=1+\frac{k_2}{k_1}\ \mbox{and}\ b=1+\frac{k_1}{k_2},
\end{equation*}
and thus $c=\sqrt{a+b-ab}=0$, which leads obviously to the accuracy of upper bound \eqref{equ:sharp bound of X-C-D-a-b-c-positive definite}.
\end{remark}

\begin{proposition}\label{prop:sharpness of three upper bounds}Upper bounds \eqref{equ:sharp bound of X-C-D+}, \eqref{equ:sharp bound of X-C-D-a-b-c-positive definite} and \eqref{equ:sharp bound of X-C-D-lambda-mu-positive definite} are all sharper than upper bound \eqref{equ:coarse bound of X}.
\end{proposition}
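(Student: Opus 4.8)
The plan is to reduce Proposition~\ref{prop:sharpness of three upper bounds} to three scalar inequalities, each asserting that the upper bound in question does not exceed $\sqrt{\Vert C\Vert_F^2+\Vert D\Vert_F^2}$, the right-hand side of \eqref{equ:coarse bound of X}. The common starting point is the parallelogram identity \eqref{eqn:parallel sum square}, which lets me write the target as $\sqrt{\Vert C\Vert_F^2+\Vert D\Vert_F^2}=\sqrt{\big(\Vert C+D\Vert_F^2+\Vert C-D\Vert_F^2\big)/2}$.

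First I would dispatch \eqref{equ:sharp bound of X-C-D+}. Writing $p=\Vert C+D\Vert_F$ and $q=\Vert C-D\Vert_F$, the trivial inequality $(p-q)^2\ge 0$ gives $\big(\tfrac{p+q}{2}\big)^2\le\tfrac{p^2+q^2}{2}=\Vert C\Vert_F^2+\Vert D\Vert_F^2$, which is exactly the required comparison. For \eqref{equ:sharp bound of X-C-D-lambda-mu-positive definite} I would note that $\lambda\ge 1$: indeed $\lambda^2\ge\lambda_1\lambda_2=\big(\Vert A^\dag\Vert_2\Vert A\Vert_2\big)\big(\Vert B\Vert_2\Vert B^\dag\Vert_2\big)\ge 1$, exactly as computed in the proof of Theorem~\ref{thm:main result-lambda1-lambda2-a-b-c+}, so that $\mu=\sqrt{(\lambda-1)/(\lambda+1)}$ from \eqref{eqn:defn of lambda-mu} satisfies $0\le\mu\le 1$. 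Hence the upper bound in \eqref{equ:sharp bound of X-C-D-lambda-mu-positive definite} is dominated by that in \eqref{equ:sharp bound of X-C-D+}, and the previous step finishes this case.

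The substantive case is \eqref{equ:sharp bound of X-C-D-a-b-c-positive definite}, and this is where I expect the real work. After squaring, it suffices to prove
\[
\big(\Vert aC+bD\Vert_F+c\Vert C-D\Vert_F\big)^2\le (a+b)^2\big(\Vert C\Vert_F^2+\Vert D\Vert_F^2\big).
\]
Since $\lambda_1,\lambda_2>0$ we have $a,b>1$, and $c^2=a+b-ab$ as noted in the proof of Theorem~\ref{thm:main result-lambda1-lambda2-a-b-c+}, whence $ab+c^2=a+b$. Putting $X=\Vert C\Vert_F^2$, $Y=\Vert D\Vert_F^2$, $Z=\mathrm{Re}\langle C,D\rangle$, $p=\Vert aC+bD\Vert_F$ and $q=c\Vert C-D\Vert_F$, one has $p^2=a^2X+b^2Y+2abZ$ and $q^2=c^2(X+Y-2Z)$. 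The key step is the weighted Cauchy--Schwarz inequality $(p+q)^2\le (ab+c^2)\big(\tfrac{p^2}{ab}+\tfrac{q^2}{c^2}\big)$, in which the weights $ab$ and $c^2$ are calibrated precisely so that the indefinite cross term $Z$ cancels: a direct computation gives $\tfrac{p^2}{ab}+\tfrac{q^2}{c^2}=(a+b)\big(\tfrac1b X+\tfrac1a Y\big)$, so that, using $ab+c^2=a+b$, the right-hand side equals $(a+b)^2\big(\tfrac1b X+\tfrac1a Y\big)$. Finally $\tfrac1b X+\tfrac1a Y\le X+Y$ because $a,b\ge 1$, and the target inequality follows.

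The main obstacle is the calibration of the weights in the Cauchy--Schwarz step: they must be chosen so that the indefinite quantity $Z=\mathrm{Re}\langle C,D\rangle$ drops out, and it is the identity $ab+c^2=a+b$ (equivalently $c^2=a+b-ab$ from \eqref{eqn:defn of a-b-c}) that makes every constant align. A minor point to settle separately is the degenerate case $c=0$, where the weighted estimate is undefined; there the claim is immediate from $\Vert aC+bD\Vert_F\le a\Vert C\Vert_F+b\Vert D\Vert_F\le\sqrt{a^2+b^2}\,\sqrt{X+Y}\le (a+b)\sqrt{X+Y}$.
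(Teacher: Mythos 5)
Your argument is correct, and for two of the three bounds it coincides with the paper's: the comparison of \eqref{equ:sharp bound of X-C-D+} with \eqref{equ:coarse bound of X} via $\big(\tfrac{p+q}{2}\big)^2\le\tfrac{p^2+q^2}{2}$ and the reduction of \eqref{equ:sharp bound of X-C-D-lambda-mu-positive definite} to \eqref{equ:sharp bound of X-C-D+} via $\mu<1$ are exactly what the paper does. Where you diverge is the treatment of \eqref{equ:sharp bound of X-C-D-a-b-c-positive definite}: the paper does not compare it with \eqref{equ:coarse bound of X} directly, but instead proves the intermediate (and strictly stronger) statement that \eqref{equ:sharp bound of X-C-D-a-b-c-positive definite} is dominated by \eqref{equ:sharp bound of X-C-D+}. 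It does this by substituting $Y=\tfrac{C+D}{2}$, $Z=\tfrac{C-D}{2}$, so that $aC+bD=(a+b)Y+(a-b)Z$ and $C-D=2Z$, and then applying the triangle inequality together with the elementary estimate $c<\min\{a,b\}$, equivalently $|a-b|+2c<a+b$, which follows from $c^2=a+b-ab$ and $a,b>1$. Your route instead goes straight to $\sqrt{\Vert C\Vert_F^2+\Vert D\Vert_F^2}$ via the weighted Cauchy--Schwarz inequality $(p+q)^2\le(ab+c^2)\big(\tfrac{p^2}{ab}+\tfrac{q^2}{c^2}\big)$, with the weights calibrated by $ab+c^2=a+b$ so that the cross term $\mathrm{Re}\langle C,D\rangle$ cancels; your computations check out, and your separate handling of $c=0$ is a sensible precaution the paper avoids by never dividing by $c$. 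The trade-off: the paper's argument is shorter (pure triangle inequality, no squaring) and yields the additional ordering \eqref{equ:sharp bound of X-C-D-a-b-c-positive definite} $\le$ \eqref{equ:sharp bound of X-C-D+}, which is of independent interest; your argument establishes only the comparison the proposition literally asserts, but the weight-calibration idea is a nice self-contained alternative and makes transparent exactly why the identity $c^2=a+b-ab$ is the right normalization.
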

\begin{proof} Let $A\in\mathbb{C}^{m\times m}$ and $B\in\mathbb{C}^{n\times n}$ be Hermitian positive definite and $C,D\in\mathbb{C}^{m\times n}$ be arbitrary. Let $a,b$ and $c$ be defined by \eqref{eqn:defn of a-b-c}. Then $c=\sqrt{a+b-ab}$ and it can be shown that
\begin{equation}\label{equ:c less than min a and b-1}c< \min\{a,b\}=\frac{a+b-|a-b|}{2}.\end{equation}
In fact, since $a>1$ and $b>1$, we have $a+b<a(a+b)$, which gives $c^2<a^2$ and thus $c<a$. Similarly, we have $c<b$.
It follows from \eqref{equ:c less than min a and b-1} that
\begin{equation}\label{equ:c less than min a and b-2}\frac{|a-b|+2c}{a+b}<1.\end{equation}

Clearly, upper bound \eqref{equ:sharp bound of X-C-D-lambda-mu-positive definite} is sharper that upper bound \eqref{equ:sharp bound of X-C-D+}, since the number $\mu$ defined by \eqref{eqn:defn of lambda-mu} is less that 1. We show that the same is true for upper bound \eqref{equ:sharp bound of X-C-D-a-b-c-positive definite}. To this end, we put
$$Y=\frac{C+D}{2}\ \mbox{and}\ Z=\frac{C-D}{2}.$$
Then $C=Y+Z$ and $D=Y-Z$, hence by \eqref{equ:c less than min a and b-2} we have
\begin{eqnarray*}\frac{\Vert (a+b)Y+(a-b)Z\Vert_F+2c\Vert Z\Vert_F}{a+b}&\le&\frac{(a+b)\Vert Y\Vert_F+\big(|a-b|+2c\big)\Vert Z\Vert_F}{a+b}\\
&\le&\Vert Y\Vert_F+\Vert Z\Vert_F,\end{eqnarray*}
which means that upper bound \eqref{equ:sharp bound of X-C-D-a-b-c-positive definite} is sharper than upper bound \eqref{equ:sharp bound of X-C-D+}.

So it remains to prove that
\begin{equation*}\theta\stackrel{def}{=}\frac{\Vert C+D\Vert_F+\Vert C-D\Vert_F}{2}\le \sqrt{\Vert C\Vert_F^2+\Vert D\Vert_F^2},\end{equation*}
which can be verified easily, since
\begin{eqnarray*}\theta^2\le \frac{\Vert C+D\Vert_F^2+\Vert C-D\Vert_F^2}{2}=\Vert C\Vert_F^2+\Vert D\Vert_F^2\ \mbox{by \eqref{eqn:parallel sum square}}.\qedhere
\end{eqnarray*}
\end{proof}

\begin{remark}Numerical tests below show that upper bound \eqref{equ:sharp bound of X-C-D-a-b-c-positive definite} is sharper than upper bound \eqref{equ:sharp bound of X-C-D-lambda-mu-positive definite} only in statistical sense.
\end{remark}

\begin{remark}\label{rem:sharpness explanation-2} It is interesting to make comparisons between upper bounds \eqref{equ:general F upper bound of Li-Rencang}, \eqref{equ:sharp bound of X-C-D-a-b-c-positive definite} and \eqref{equ:sharp bound of X-C-D-lambda-mu-positive definite} from statistical point of view. This can be illustrated by numerical tests as follows:

\begin{center} Numerical tests\end{center}

\begin{enumerate}
\item[{\rm (i)}] Let $A_1, B_1,C, D\in\mathbb{C}^{3\times 3}$ be random matrices produced by using Matlab command rand(3). Put $A=A_1^*A_1$ and $B=B_1^*B_1$. Each time run Matlab $10^5$ times.  For each time,  let
\begin{itemize}
\item $\alpha$ be the number of tests in which
upper bound \eqref{equ:sharp bound of X-C-D-a-b-c-positive definite}$\le$ upper bound \eqref{equ:general F upper bound of Li-Rencang};
\end{itemize}
\begin{itemize}
\item  $\beta$ be the number of tests in which
upper bound \eqref{equ:sharp bound of X-C-D-a-b-c-positive definite}$\le$upper bound \eqref{equ:sharp bound of X-C-D-lambda-mu-positive definite};
\end{itemize}
\begin{itemize}
\item $\gamma$ be the number of tests in which
upper bound \eqref{equ:sharp bound of X-C-D-lambda-mu-positive definite}$\le$ upper bound \eqref{equ:general F upper bound of Li-Rencang}.
\end{itemize}
 Then $\alpha\approx 99986, \beta\approx 100000$ and $\gamma\approx 99976$\footnote{These three average numbers depend on computer model, Matlab version and running times.}.

\item[{\rm (ii)}] Let $A,B,C$ be the same as in (i) and let $D\in\mathbb{C}^{3\times 3}$ be the zero matrix. Then $\alpha\approx 39760, \beta\approx 99891$ and $\gamma\approx 0$.
\item[{\rm (iii)}] Let $A,B,D$ be the same as in (i) and let $C\in\mathbb{C}^{3\times 3}$ be the zero matrix. Then $\alpha\approx 39876, \beta\approx 99886$ and $\gamma\approx 0$.
\item[{\rm (iv)}] Let $A,B,C$ be the same as in (i) and let $D=-C$. Then $\alpha\approx 100000, \beta\approx 99792$ and $\gamma\approx 100000$.

\item[{\rm (v)}] Let $A,B,C$ be the same as in (i) and let $D=C$. Then $\alpha\approx 100000, \beta\approx 74899$ and $\gamma\approx 100000$.
\end{enumerate}

Roughly speaking, when both $C$ and $D$ are
random or if one of $C-D$ and $C+D$ is small in Frobenius norm, upper bounds \eqref{equ:sharp bound of X-C-D-a-b-c-positive definite} and \eqref{equ:sharp bound of X-C-D-lambda-mu-positive definite} are statistically better than upper bound \eqref{equ:general F upper bound of Li-Rencang}, whereas \eqref{equ:general F upper bound of Li-Rencang} is statistically better if $C$ or $D$ is small in Frobenius norm. A concrete example is constructed as follows, where $C-D$ is small in Frobenius norm.
\end{remark}
\begin{example}\label{ex:sharpness explanation+}{\rm Let $A=I_2$, $B=\left(
                                                                  \begin{array}{cc}
                                                                    1 & \sqrt{3} \\
                                                                    \sqrt{3} & 4 \\
                                                                  \end{array}
                                                                \right)$, $C=S\left(\frac{5\pi}{32}\right)$ and $D=S\left(\frac{\pi}{6}\right)$, where
\begin{equation*}S(t)=\left(
                        \begin{array}{cc}
                          \cos t & \frac{\sin t}{4}\\
                          \frac{\sin t}{4} & \cos t\\
                        \end{array}
                      \right)\ \mbox{for any $t\in (-\infty, +\infty)$}.
\end{equation*}
Then
\begin{equation*}X=(C+DB)(I_2+B)^{-1}=\left(
                                        \begin{array}{cc}
                                          0.8791 & 0.1190 \\
                                          0.1160 & 0.8723 \\
                                        \end{array}
                                      \right) \ \mbox{and thus}\ \Vert X\Vert_F=1.2496.\end{equation*}
Moreover, we have $\lambda=4.7913$, $\mu=0.8091$ by \eqref{eqn:defn of lambda-mu} and $\eta= 1.1832$ by \eqref{equ:defn of eta-Li-Rencang}.
The relative errors of various upper bounds are listed in Table~\ref{tab:comparison}, which shows for this example, upper bounds \eqref{equ:sharp bound of X-C-D+}, \eqref{equ:sharp bound of X-C-D-a-b-c-positive definite} and \eqref{equ:sharp bound of X-C-D-lambda-mu-positive definite} are much better than the other two.

\begin{table}[htbp]
  \caption{Comparison of  various Frobenius norm upper bounds\label{tab:comparison}}
  \label{tab:foo}
 \centering
  \begin{tabular}{|c|c|c|c|c|c|} \hline
 & u.b.  & u.b.& u.b.& u.b.&u.b.\\
  &\eqref{equ:general F upper bound of Li-Rencang} & \eqref{equ:coarse bound of X}& \eqref{equ:sharp bound of X-C-D+} & \eqref{equ:sharp bound of X-C-D-a-b-c-positive definite}& \eqref{equ:sharp bound of X-C-D-lambda-mu-positive definite}
  \\\hline
    Numerical value& 1.4915 & 1.7648 & 1.2602& 1.2578 & 1.2578\\\hline
    Relative error& 19.36\% &41.23\% & 0.85\% & 0.66\% &0.66\%\\ \hline
  \end{tabular}
\end{table}

}\end{example}

\begin{remark}{\rm Before ending this section, we make a few comments on the newly obtained upper bounds \eqref{equ:sharp bound of X-C-D+}, \eqref{equ:sharp bound of X-C-D-a-b-c-positive definite} and \eqref{equ:sharp bound of X-C-D-lambda-mu-positive definite}.
One advantage of these upper bounds are their sharpness under some circumstances. As shown in Proposition~\ref{prop:sharpness of three upper bounds}, all of them are sharper than upper bound \eqref{equ:coarse bound of X}, which is derived
directly from a widely used upper bound \eqref{equ:general F upper bound of Li-Rencang}. Some comparisons between upper bounds \eqref{equ:general F upper bound of Li-Rencang}, \eqref{equ:sharp bound of X-C-D-a-b-c-positive definite} and \eqref{equ:sharp bound of X-C-D-lambda-mu-positive definite} are presented based on numerical tests.

Another advantage of upper bounds \eqref{equ:sharp bound of X-C-D-a-b-c-positive definite} and \eqref{equ:sharp bound of X-C-D-lambda-mu-positive definite} is their easiness to be determined. To deal with the Frobenius norm rather than the spectral norm, a parameter $\eta$ is associated to upper bound \eqref{equ:general F upper bound of Li-Rencang}. If the Hermitian positive-definite matrices $A$ and $B$ are both large in size, then this parameter $\eta$ seems to be somehow inconvenient to be determined, since many eigenvalues of $A$ and $B$ have to be considered before getting this minimal value formulated by \eqref{equ:defn of eta-Li-Rencang}.
By comparison, all parameters associated to upper bounds \eqref{equ:sharp bound of X-C-D-a-b-c-positive definite} and \eqref{equ:sharp bound of X-C-D-lambda-mu-positive definite} are convenient to be determined.

In addition, literatures are rarely found on norm lower bounds of the solution $X$ to \eqref{equ:Sylvester equation}. In this section we have managed to provide norm lower bounds  in Theorem~\ref{thm:main result-lambda1-lambda2-a-b-c+}, Theorem~\ref{thm:main result-lambda-mu}
and Corollary~\ref{cor:main result-lambda1-lambda2-a-b-c+}, respectively. With the lower bound given in \eqref{equ:sharp bound of X-C-D-a-b-c-positive definite},
it can be deduced immediately that upper bound \eqref{equ:sharp bound of X-C-D-a-b-c-positive definite} will be accurate if $C=D$ or the number $c$ defined by \eqref{eqn:defn of a-b-c} is zero, as is the case where $A$ and $B$ are positive scalar matrices.
}\end{remark}

\section{New perturbation bounds for the generalized polar decomposition}\label{sec:perturbation bounds for the polar decompodsition}
In this section, we study perturbation bounds
for the generalized polar decomposition.
First, we provide the perturbation estimation for subunitary polar factors as follows:

\begin{theorem}\label{thm:result of subunitary polar factor}
 Let $B$ be the multiplicative perturbation of $A\in\mathbb{C}^{m\times n}$ given by (\ref{equ:defn of B}), and let $A=U|A|$ and $B=V|B|$ be the generalized polar decompositions of $A$ and $B$, respectively. Then
\begin{equation}\label{equ:inequality of V-U}
\Vert V-U\Vert_{F}\leq\inf_{s,t\in\mathbb{C}}\sqrt{\varphi_1^2(s,t)+\varphi_2^2(s,t)-\varphi_3^2(s,t)},
\end{equation}
where $s,t\in\mathbb{C}$ are arbitrary, $\lambda$ is defined by \eqref{eqn:defn of lambda-mu}
and
\begin{eqnarray*}\label{equ:equality of varphi1}&&\varphi_1(s,t)=\Vert V(I_n-tD_2^{-1})+VV^*(\bar{s}D_1^{-1}-I_m)U\Vert_F, \\
\label{equ:equality of varphi2}&&\varphi_2(s,t)=\Vert U^*(\bar{t} D_1-I_m)+U^*U (I_n-sD_2)V^*\Vert_{F},\\
\label{equ:equality of varphi3}&&\varphi_3(s,t)=\frac{\Vert V(\bar{s}D_2^*-tD_2^{-1})U^*U+VV^*(\bar{s}D_1^{-1}-tD_1^*)U\Vert_{F}}{\sqrt{\lambda+1}}.
\end{eqnarray*}
\end{theorem}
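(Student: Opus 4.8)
The plan is to reduce \eqref{equ:inequality of V-U} to the Frobenius-norm theory of Section~\ref{sec:Frobenius norm bounds}, applied to a structured Sylvester equation \eqref{equ:Sylvester equation} whose coefficient matrices are the positive semi-definite polar factors $|A^*|\in\mathbb{C}^{m\times m}$ and $|B|\in\mathbb{C}^{n\times n}$. These are Hermitian positive semi-definite, and since $\Vert |A^*|\Vert_2=\Vert A\Vert_2$, $\Vert |A^*|^\dag\Vert_2=\Vert A^\dag\Vert_2$, $\Vert |B|\Vert_2=\Vert B\Vert_2$ and $\Vert |B|^\dag\Vert_2=\Vert B^\dag\Vert_2$, the quantity produced by \eqref{eqn:defn of lambda-mu} for the pair $(|A^*|,|B|)$ is exactly the $\lambda$ of the statement. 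Moreover $|A^*|^\dag|A^*|=UU^*$ and $|B||B|^\dag=V^*V$, so the projection hypotheses of Lemma~\ref{lem:key technique lemma} and Theorem~\ref{thm:main result-lambda-mu} translate into conditions on the ranges of $U$, $V$ and of $U^*$, $V^*$.

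First I would convert the multiplicative perturbation \eqref{equ:defn of B} into operator identities. Using \eqref{eqn:basic properties of U-1}--\eqref{eqn:basic properties of U-2} for both $A=U|A|$ and $B=V|B|$, the defining relation $B=D_1^*AD_2$ yields $|A^*|U=A$, $U|A|=A$, $|B^*|V=B=D_1^*AD_2$ and $V|B|=B$, together with their adjoints; these are the identities that feed the perturbation data $D_1,D_2$ into the Sylvester framework. Next, for arbitrary scalars $s,t\in\mathbb{C}$ I would introduce the matrices $\Psi_1,\Psi_2$ whose Frobenius norms are $\varphi_1,\varphi_2$, and record the simplifications $\Psi_1=VV^*(V-U)-tVD_2^{-1}+\bar sVV^*D_1^{-1}U$ and $\Psi_2=U^*U(V-U)^*+\bar tU^*D_1-sU^*UD_2V^*$, obtained from $VV^*V=V$ and $U^*UU^*=U^*$. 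Thus $\varphi_1,\varphi_2$ are the norms of the component $VV^*(V-U)$ in the range of $B$ and the component $(V-U)U^*U$ in the range of $U^*$, each perturbed by an $(s,t)$-dependent term; at $s=t=0$ one has $\varphi_3=0$ and the cruder estimate $\Vert V-U\Vert_F^2\le\varphi_1^2+\varphi_2^2$. The scalars $s,t$ are the device that compensates for the fact that, since $D_1,D_2\neq I$, the matrices $V$ and $U$ have different ranges and coranges, so that the naive choice $X=V-U$ violates the projection hypotheses of Lemma~\ref{lem:key technique lemma}.

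The core of the proof is then to produce, for each fixed $s,t$, a structured Sylvester equation of the form \eqref{equ:Sylvester equation} with coefficient matrices $|A^*|$ and $|B|$ and data $C,D$ built from $U,V,D_1,D_2,s,t$, for which the solution $X$ obeys the projection conditions and for which the exact Frobenius identity \eqref{equ:F norm of D-C+} takes the form of a Pythagorean splitting of the gap,
\[ \varphi_1^2+\varphi_2^2-\Vert V-U\Vert_F^2=\big\Vert |A^*|^{\frac12}(X-C)(|B|^\dag)^{\frac12}\big\Vert_F^2+\big\Vert (|A^*|^\dag)^{\frac12}(D-X)\,|B|^{\frac12}\big\Vert_F^2. \]
Both terms on the right are nonnegative, and I would bound their sum from below by the spectral-norm inequalities \eqref{eqn:half ineqiality-1}--\eqref{eqn:half ineqiality-2} used in the proof of Theorem~\ref{thm:main result-lambda-mu} (with $a=b=1+\frac1\lambda$), assembling the inner matrices into the single expression defining $\varphi_3$ and producing the factor $\frac{1}{\lambda+1}$. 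This gives $\Vert V-U\Vert_F^2\le\varphi_1^2+\varphi_2^2-\varphi_3^2$ for every $s,t$, whence \eqref{equ:inequality of V-U} follows by taking the infimum.

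I expect the main obstacle to lie in the explicit construction of the Sylvester data $C,D$ and solution $X$ so that \eqref{equ:F norm of D-C+} reproduces precisely the combination $\varphi_1^2+\varphi_2^2-\Vert V-U\Vert_F^2$ on the left and $\varphi_3$ on the right, together with the verification of all four projection conditions. In particular, extracting the sharp constant $\frac{1}{\lambda+1}$ (rather than the weaker $\frac1\lambda$ that a term-by-term use of \eqref{eqn:half ineqiality-1}--\eqref{eqn:half ineqiality-2} would give) requires handling the two remainder terms jointly rather than separately; this refined spectral estimate is the delicate computational heart of the argument.
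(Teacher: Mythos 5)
Your overall architecture is the paper's: split $V-U$ by the range projections into $\Omega_1+\Omega_2+\Omega_3$ with $\Omega_1=VV^*(V-U)U^*U=VU^*U-VV^*U$, $\Omega_2=V(I_n-U^*U)$, $\Omega_3=-(I_m-VV^*)U$, show the middle block solves a structured Sylvester equation \eqref{equ:Sylvester equation} whose coefficients are polar factors, apply Theorem~\ref{thm:main result-lambda-mu}, and reassemble by orthogonality. However, two of your key steps fail as stated. First, the coefficient matrices must be $|B^*|\in\mathbb{C}^{m\times m}$ and $|A|\in\mathbb{C}^{n\times n}$, not $|A^*|$ and $|B|$: the block isolated by your own simplifications of $\varphi_1,\varphi_2$ is $\Omega_1$, which satisfies $VV^*\Omega_1=\Omega_1=\Omega_1U^*U$, and since $|B^*|^\dag|B^*|=VV^*$ and $|A|\,|A|^\dag=U^*U$ the correct equation is $|B^*|\Omega_1+\Omega_1|A|=|B^*|C+D|A|$ with $C=V(I_n-tD_2^{-1})U^*U+VV^*(\bar sD_1^{-1}-I_m)U$ and $D=VV^*(tD_1^*-I_m)U+V(I_n-\bar sD_2^*)U^*U$. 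With your pair the admissible solutions are supported as $UU^*(\cdot)V^*V$, which is a different block and leads to the transposed variant of the theorem (the one obtained by applying it to $A^*,B^*$), not to the stated $\varphi_i$.

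Second, your ``Pythagorean splitting of the gap'' is not an identity. Writing $W_1=C+\Omega_2$ and $W_2=D+\Omega_3$ and using the two orthogonality relations, one gets $\varphi_1^2+\varphi_2^2-\Vert V-U\Vert_F^2=\Vert C\Vert_F^2+\Vert D\Vert_F^2-\Vert\Omega_1\Vert_F^2$, whereas rearranging \eqref{equ:F norm of D-C+} (with $X=\Omega_1$) gives for the sum of the two quadratic remainder terms the value $2\,\mathrm{Re}\langle C+D,\Omega_1\rangle-2\Vert\Omega_1\Vert_F^2-2\,\mathrm{Re}\langle C,D\rangle$; these differ by exactly $\Vert C+D-\Omega_1\Vert_F^2$, so your claimed equality would force $\Omega_1=C+D$, which is false in general. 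The term $-\varphi_3^2=-\Vert C-D\Vert_F^2/(\lambda+1)$ is likewise not obtained as a joint lower bound on the two remainder terms; it comes from the upper bound $\Vert\Omega_1\Vert_F\le\tfrac12\big(\Vert C+D\Vert_F+\mu\Vert C-D\Vert_F\big)$ of Theorem~\ref{thm:main result-lambda-mu} followed by $\big(\tfrac{x+\mu y}{2}\big)^2\le\tfrac{x^2+\mu^2y^2}{2}$ and $1-\mu^2=\tfrac{2}{\lambda+1}$. Finally, the actual derivation of $C$ and $D$ --- sandwiching the identities $B-A=B(I_n-tD_2^{-1})+(tD_1^*-I_m)A$ and $A-B=A(I_n-sD_2)+\big(s(D_1^{-1})^*-I_m\big)B$ between polar factors and combining with the adjoint relations of \eqref{eqn:basic properties of U-1}--\eqref{eqn:basic properties of U-2} --- is the substance of the proof, and you explicitly leave it as an open obstacle; without it the argument is incomplete.
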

\begin{proof} It is clear that
\begin{eqnarray}V-U&=&VV^*(V-U)U^*U+VV^*(V-U)(I_n-U^*U)\nonumber\\
& &+(I_m-VV^*)(V-U)U^*U+(I_m-VV^*)(V-U)(I_n-U^*U)\nonumber\\
\label{eqn:V-U is the summation of three terms}&=&\Omega_1+\Omega_2+\Omega_3,
\end{eqnarray}
where
\begin{eqnarray}\label{eqn:defn of Omega1}&&\Omega_1=VU^*U -VV^*U=VV^*\cdot\Omega_1\cdot U^*U,\\
&&\label{eqn:defn of Omega2}\Omega_2=V (I_n-U^*U)=VV^*\cdot\Omega_2\cdot(I_n-U^*U),\\
&&\label{eqn:defn of Omega3}\Omega_3=-(I_m-VV^*)U=(I_m-VV^*)\cdot \Omega_3.
\end{eqnarray}
By (\ref{eqn: P and I-P f norm}) and (\ref{eqn: Q and I-Q f norm}) we have
\begin{equation}\label{equ:computation of V-U}\Vert V-U \Vert_F^2=\Vert \Omega_1+\Omega_2\Vert_F^2+\Vert \Omega_3\Vert_F^2=\Vert \Omega_1 \Vert_F^2+\Vert \Omega_2 \Vert_F^2+\Vert \Omega_3 \Vert_F^2.
\end{equation}
In what follows we first deal with $\Vert \Omega_1\Vert_{F}^2$. Since $B=D^*_1AD_2$, we have
\begin{equation}\label{equ:trivial equality from A and B}B D_2^{-1}=D_1^* A\ \mbox{and}\ (D_1^{-1})^*B=AD_2,
\end{equation}
hence for any $t, s\in\mathbb{C}$, it holds that
\begin{eqnarray}\label{eqn:expended expression of B-A} &&B-A=B(I_n-tD^{-1}_2)+(tD^*_1-I_m)A, \\
&&\label{eqn:expended expression of A-B} A-B=A(I_n-sD_2)+\big(s(D_1^{-1})^*-I_m\big)B.
\end{eqnarray}
 Therefore, by \eqref{eqn:expended expression of B-A} we have
\begin{eqnarray}BA^{\dag}A-BB^{\dag}A&=&BB^\dag(B-A)A^\dag A\nonumber\\
&=&BB^\dag\left[B(I_n-tD^{-1}_2)+(tD^*_1-I_m)A\right]A^\dag A\nonumber\\
\label{eqn:sth related with Omega1}&=&B(I_n-tD_2^{-1})A^{\dag}A+BB^{\dag}(tD^*_1-I_m)A.
\end{eqnarray}
Also it follows from \eqref{eqn:expended expression of B-A}  that
\begin{eqnarray}B(I_n-A^{\dag}A)&=&BB^\dag(B-A)(I_n-A^{\dag}A)\nonumber\\
&=&BB^\dag \left[B(I_n-tD^{-1}_2)+(tD^*_1-I_m)A\right] (I_n-A^{\dag}A)\nonumber\\
&=&B(I_n-tD_2^{-1})(I_n-A^{\dag}A),\\
\label{eqn:defn of Phi3}-(I_m-BB^{\dag})A&=&(I_m-BB^\dag)(B-A)A^\dag A\nonumber\\
&=&(I_m-BB^\dag)\left[B(I_n-tD^{-1}_2)+(tD^*_1-I_m)A\right]A^\dag A\nonumber\\
&=&(I_m-BB^{\dag})(tD_1^*-I_m)A.
\end{eqnarray}
Since $A=U|A|$ and $B=V|B|$ are the generalized polar decompositions of $A$ and $B$ respectively, by \eqref{eqn:basic properties of U-1} and \eqref{eqn:basic properties of U-2}
we have
\begin{equation*}A^\dag A=U^*U, BB^\dag=VV^*, A=U|A|\ \mbox{and}\ B=|B^*|V.\end{equation*}
The equations above together with \eqref{eqn:sth related with Omega1} yield
\begin{eqnarray}\label{equ:middle-term-equality1} |B^*|\cdot VU^*U-VV^*U\cdot|A|&=&|B^*|\cdot V(I_n-tD_2^{-1})U^*U\nonumber\\
& &+VV^*(tD_1^*-I_m)U\cdot|A|.
\end{eqnarray}
Similarly, from \eqref{eqn:expended expression of A-B} we can obtain
\begin{eqnarray*}|A^*|\cdot UV^*V-UU^*V\cdot|B|&=&|A^*|\cdot U(I_n-sD_2)V^*V\nonumber\\
& &+UU^*\big(s(D_1^{-1})^*-I_m\big)V\cdot|B|,
\end{eqnarray*}
which gives
\begin{eqnarray}\label{equ:middle-term-equality2}|A|V^*-U^*|B^*|&=&|A|(I_n-sD_2)V^*+U^*\big(s(D_1^{-1})^*-I_m\big)|B^*|,
\end{eqnarray}
since $U^*|A^*|U=|A|$ and $V|B|V^*=|B^*|$. In view of $U^*U|A|=|A|$ and $|B^*|VV^*=|B^*|$, from \eqref{equ:middle-term-equality2}
we first take $*$-operation and then get
\begin{eqnarray}\label{equ:middle-term-equality3} VU^*U\cdot|A|-|B^*|\cdot VV^*U&=&V(I_n-\overline{s}D_2^*)U^*U\cdot|A|\nonumber\\
& &+|B^*|\cdot VV^*(\overline{s}D_1^{-1}-I_m)U.
\end{eqnarray}
Then the summation of \eqref{equ:middle-term-equality1} and \eqref{equ:middle-term-equality3} gives
\begin{equation}\label{equ:induced Sylvester equation-1}|B^*|\Omega_1+\Omega_1 |A|=|B^*|C+D|A|,\end{equation}
where $\Omega_1$ is given by \eqref{eqn:defn of Omega1} and
\begin{eqnarray}\label{eqn:detailed expression of C}&&C=V(I_n-tD_2^{-1})U^*U+VV^*(\overline{s}D_1^{-1}-I_m)U,\\
\label{eqn:detailed expression of D}&&D=VV^*(tD_1^*-I_m)U+V(I_n-\overline{s}D_2^*)U^*U.
\end{eqnarray}
Note that both $|B^*|$ and $|A|$ are Hermitian positive semi-definite, so we have
\begin{eqnarray}\label{eqn:equality of A+A}
&&|A|\cdot|A|^{\dag}=|A|^{\dag}\cdot|A|=P_{\mathcal{R}(|A|)}
=P_{\mathcal{R}(A^*)}=A^{\dag}A=U^*U,\\
\label{eqn:equality of BB+}
&&|B^*|\cdot|B^*|^{\dag}=|B^*|^{\dag}\cdot|B^*|=P_{\mathcal{R}(|B^*|)}
=P_{\mathcal{R}(B)}=BB^{\dag}=VV^*.
\end{eqnarray}
It follows from \eqref{eqn:detailed expression of C}, \eqref{eqn:equality of A+A} and  \eqref{eqn:equality of BB+}    that
\begin{eqnarray*}
|B^*|^\dag \cdot |B^*|\cdot C=VV^*C=C=CU^*U=C\cdot |A|\cdot |A|^\dag.
\end{eqnarray*}
Similarly, it can be deduced from \eqref{eqn:defn of Omega1}, \eqref{eqn:detailed expression of D}, \eqref{eqn:equality of A+A}  and \eqref{eqn:equality of BB+} that
\begin{equation*}
|B^*|^\dag \cdot |B^*|\cdot D=D=D\cdot |A|\cdot |A|^\dag, \ |B^*|^\dag \cdot |B^*|\cdot \Omega_1=\Omega_1=\Omega_1\cdot |A|\cdot |A|^\dag.
\end{equation*}
Therefore, by Theorem~\ref{thm:main result-lambda-mu} we have
\begin{equation}\label{equ:norm of Omega-middle term-1}\Vert\Omega_1\Vert_F\le \frac{\Vert C+D\Vert_F+\mu\Vert C-D\Vert_F}{2},\end{equation}
where $\lambda$ and $\mu$ are given by \eqref{eqn:defn of lambda-mu}, since $\big\Vert\, |B^*|\,\big\Vert=\Vert B\Vert$, $\big\Vert\, |A|\,\big\Vert=\Vert A\Vert$ and
\begin{eqnarray*}&&|B^*|^\dag=\left[(BB^*)^\frac12\right]^\dag=\left[(BB^*)^\dag\right]^\frac12=\left[(B^\dag)^*B^\dag\right]^\frac12\Longrightarrow
\left\Vert |B^*|^\dag\right\Vert_2=\Vert B^\dag\Vert_2,\\
&&|A|^\dag=\left[(A^*A)^\frac12\right]^\dag=\left[(A^*A)^\dag\right]^\frac12
=\left[A^\dag (A^\dag)^*\right]^\frac12\Longrightarrow
\left\Vert |A|^\dag\right\Vert_2=\Vert A^\dag\Vert_2.
\end{eqnarray*}
It is evident that
\begin{eqnarray*}\frac{\Big[\Vert C+D\Vert_F+\mu\Vert C-D\Vert_F\Big]^2}{4}\le \frac{\Vert C+D\Vert_F^2+\mu^2\Vert C-D\Vert_F^2}{2},
\end{eqnarray*}
so by \eqref{equ:norm of Omega-middle term-1} we can obtain
\begin{eqnarray}
\Vert\Omega_1\Vert_{F}^2&\leq&\frac{\Vert C+D\Vert_F^2+\Vert C-D\Vert_F^2}{2}-\frac{(1-\mu^2)\Vert C-D\Vert_F^2}{2}\nonumber\\
\label{eqn:derived upper bound of Omega1}&=&\frac{\Vert C+D\Vert_F^2+\Vert C-D\Vert_F^2}{2}-\frac{\Vert C-D\Vert_F^2}{\lambda+1}.
\end{eqnarray}

Next, we deal with $\Vert \Omega_1 \Vert_F^2+\Vert \Omega_2 \Vert_F^2+\Vert \Omega_3 \Vert_F^2$ based on \eqref{eqn:derived upper bound of Omega1}.
It follows from \eqref{eqn:detailed expression of C},  \eqref{eqn:detailed expression of D},  \eqref{eqn:defn of Omega2}, \eqref{eqn:defn of Omega3}, \eqref{eqn: P and I-P f norm} and \eqref{eqn: Q and I-Q f norm} that
\begin{eqnarray}&&\frac{\Vert C+D\Vert_F^2+\Vert C-D\Vert_F^2}{2}+\Vert\Omega_2\Vert_F^2+\Vert\Omega_3\Vert_F^2\nonumber\\
&=&\frac{\Vert C+D+\Omega_2+\Omega_3\Vert_F^2+\Vert C-D+\Omega_2-\Omega_3\Vert_F^2}{2}\nonumber\\
\label{eqn:middle term upper bound-11}&=&\frac{\Vert W_1+W_2\Vert_F^2+\Vert W_1-W_2\Vert_F^2}{2}=\Vert W_1\Vert_F^2+\Vert W_2\Vert_F^2,
\end{eqnarray}
where
\begin{equation}\label{equ:defn of W1 and W2} W_1=C+\Omega_2\ \mbox{and}\ W_2=D+\Omega_3.\end{equation}
Note that by \eqref{eqn:defn of Omega2}, \eqref{eqn:defn of Omega3}  and \eqref{equ:trivial equality from A and B}, we have
\begin{eqnarray*} \Omega_2&=&V\cdot V^*V(I_n-U^*U)=V\cdot B^{\dag}B(I_n-A^{\dag}A)\\
&=&V\cdot B^{\dag}B(I_n-tD_2^{-1})(I_n-A^{\dag}A)=V(I_n-tD_2^{-1})(I_n-U^*U),\\
\Omega_3&=&-(I_m-BB^{\dag})AA^{\dag}\cdot U=(I_m-BB^{\dag})(tD_1^*-I_m)AA^{\dag}\cdot U\\
&=&(I_m-VV^*)(tD_1^*-I_m)U.
\end{eqnarray*}
The modified expressions of $\Omega_2$ and $\Omega_3$ above, together with \eqref{eqn:detailed expression of C}, \eqref{eqn:detailed expression of D} and  \eqref{equ:defn of W1 and W2},  yield
\begin{eqnarray}\label{eqn:expression of W1}W_1&=&V(I_n-tD_2^{-1})+VV^*(\bar{s}D_1^{-1}-I_m)U,\\
\label{eqn:expression of W2}W_2&=&(tD_1^*-I_m)U+V(I_n-\bar{s}D_2^*)U^*U.
\end{eqnarray}
Therefore, by \eqref{equ:computation of V-U}, \eqref{eqn:derived upper bound of Omega1}--\eqref{eqn:middle term upper bound-11},  \eqref{eqn:expression of W1}--\eqref{eqn:expression of W2} and \eqref{eqn:detailed expression of C}--\eqref{eqn:detailed expression of D},  we conclude that
\begin{eqnarray*}\Vert V-U\Vert^2_F\le\Vert W_1\Vert_F^2+\Vert W_2\Vert_F^2-\frac{\Vert C-D\Vert_F^2}{\lambda+1}=\sum_{i=1}^3 \varphi_i^2(s,t).
\end{eqnarray*}This completes the proof of \eqref{equ:inequality of V-U}.
\end{proof}

One of the main results of \cite{Chen-Li-Sun} turns out to be a corollary as follows:
\begin{corollary}{\rm \cite[Theorem~2.2]{Chen-Li-Sun}}\ Let $B$ be the multiplicative perturbation of $A\in\mathbb{C}^{m\times n}$ given by (\ref{equ:defn of B}), and let $A=U|A|$ and $B=V|B|$ be the generalized polar decompositions of $A$ and $B$, respectively. Then
\begin{equation*}\label{equ:inequality of Chen-Li-Sun}
\Vert V-U\Vert_{F}\leq \sqrt{\left(\Vert I_m-D_1^{-1}\Vert_F+\Vert I_n-D_2^{-1}\Vert_{F}\right)^2+\left(\Vert I_m-D_1\Vert_{F}+\Vert I_n-D_2\Vert_{F}\right)^2}.
\end{equation*}
\end{corollary}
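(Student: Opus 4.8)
The plan is to derive the stated inequality from Theorem~\ref{thm:result of subunitary polar factor} by specializing the free parameters. Since the right-hand side of \eqref{equ:inequality of V-U} is an infimum over all $s,t\in\mathbb{C}$, every admissible choice furnishes a valid upper bound for $\Vert V-U\Vert_F$. I would simply take $s=t=1$, so that $\bar s=\bar t=1$. Because $\varphi_3^2(1,1)\ge 0$, discarding the subtracted term gives the slightly weaker but cleaner estimate
\[
\Vert V-U\Vert_F\le\sqrt{\varphi_1^2(1,1)+\varphi_2^2(1,1)}.
\]
It then remains only to bound $\varphi_1(1,1)$ and $\varphi_2(1,1)$ by the corresponding sums of the Frobenius norms of $I_m-D_1^{-1}$, $I_n-D_2^{-1}$, $I_m-D_1$ and $I_n-D_2$.

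For $\varphi_1(1,1)=\Vert V(I_n-D_2^{-1})+VV^*(D_1^{-1}-I_m)U\Vert_F$ I would apply the triangle inequality and then the submultiplicative estimates $\Vert XY\Vert_F\le\Vert X\Vert_2\Vert Y\Vert_F$ and $\Vert XY\Vert_F\le\Vert X\Vert_F\Vert Y\Vert_2$ recalled at the start of Section~\ref{sec:Frobenius norm bounds}. Since $V$ is a partial isometry and $VV^*$, $U$ arise from the generalized polar decompositions, one has $\Vert V\Vert_2=\Vert U\Vert_2=1$ and $\Vert VV^*\Vert_2=1$; hence $\Vert V(I_n-D_2^{-1})\Vert_F\le\Vert I_n-D_2^{-1}\Vert_F$ and $\Vert VV^*(D_1^{-1}-I_m)U\Vert_F\le\Vert I_m-D_1^{-1}\Vert_F$, which together give $\varphi_1(1,1)\le\Vert I_m-D_1^{-1}\Vert_F+\Vert I_n-D_2^{-1}\Vert_F$. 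The identical argument applied to $\varphi_2(1,1)=\Vert U^*(D_1-I_m)+U^*U(I_n-D_2)V^*\Vert_F$, using $\Vert U^*\Vert_2=\Vert V^*\Vert_2=1$ and $\Vert U^*U\Vert_2=1$, yields $\varphi_2(1,1)\le\Vert I_m-D_1\Vert_F+\Vert I_n-D_2\Vert_F$. Substituting these two bounds into the displayed inequality above produces exactly the claimed estimate.

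The computations here are entirely routine; the only point requiring a moment's care is the justification that the partial isometries and projections all have spectral norm equal to $1$. This holds provided $A\neq 0$ (equivalently $U\neq 0$), in which case $V\neq 0$ as well, since $B=D_1^*AD_2$ with $D_1,D_2$ nonsingular is rank-preserving. The degenerate case $A=0$ forces $B=0$ and $U=V=0$, so the asserted inequality holds trivially. Thus no genuine obstacle arises: the corollary is obtained by the single specialization $s=t=1$, followed by the triangle inequality and the standard mixed spectral/Frobenius norm estimates, with the discarded term $\varphi_3^2(1,1)$ precisely accounting for the fact that Theorem~\ref{thm:result of subunitary polar factor} is sharper than the corollary.
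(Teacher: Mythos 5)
Your proposal is correct and coincides with the paper's own argument: specialize Theorem~\ref{thm:result of subunitary polar factor} at $s=t=1$, drop the nonnegative term $\varphi_3^2(1,1)$, and bound $\varphi_1(1,1)$ and $\varphi_2(1,1)$ via the triangle inequality together with the mixed spectral/Frobenius estimates and $\Vert U\Vert_2,\Vert V\Vert_2,\Vert UU^*\Vert_2,\Vert V^*V\Vert_2\le 1$. Your extra remark on the degenerate case $A=0$ is harmless but unnecessary, since the spectral-norm bounds by $1$ hold for any partial isometry, including the zero one.
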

\begin{proof} With the notations as in the proof of Theorem~\ref{thm:result of subunitary polar factor}, we have
\begin{eqnarray*}\label{eqn:defn of varphi1(1,1)}
\varphi_1(1,1)&\leq& \Vert V\Vert_2\cdot \Vert I_n-D_2^{-1}\Vert_{F}+\Vert VV^*\Vert_2\cdot \Vert D_1^{-1}-I_m\Vert_F\cdot \Vert U\Vert_2\nonumber\\
&\leq&\Vert I_n-D_2^{-1}\Vert_{F}+\Vert I_m-D_1^{-1}\Vert_F,\\
\label{eqn:defn of varphi2(1,1)}
\varphi_2(1,1)&\leq& \Vert U^*\Vert_2\cdot \Vert D_1-I_m\Vert_{F}+\Vert U^*U\Vert_2\cdot \Vert I_n-D_2\Vert_{F}\cdot \Vert V^*\Vert_2\nonumber\\
&\leq&\Vert I_m-D_1\Vert_{F}+\Vert I_n-D_2\Vert_{F}.
\end{eqnarray*}
By \eqref{equ:inequality of V-U} we have
\begin{eqnarray*}\Vert V-U\Vert_{F}\leq\sqrt{\varphi_1^2(1,1)+\varphi_2^2(1,1)-\varphi_3^2(1,1)}\leq\sqrt{\varphi_1^2(1,1)+\varphi_2^2(1,1)}.
\end{eqnarray*}
The desired upper bound then follows.
\end{proof}

Next, we provide the perturbation estimation for positive semi-definite polar factors as follows:

\begin{theorem} \label{thm:result of positive semidefinite polar factor}
 Let $B$ be the multiplicative perturbation of $A\in\mathbb{C}^{m\times n}$ given  by (\ref{equ:defn of B}), and let $A=U|A|$ and $B=V|B|$ be the generalized polar decompositions of $A$ and $B$, respectively. Then
\begin{equation}\label{equ:inequality of B+B-A+A}
\big\Vert |B|-|A|\big\Vert_{F}\leq\inf_{s,t\in\mathbb{C}}\sqrt{\gamma_1^2(s,t)+\gamma_2^2(s,t)-\gamma_3^2(s,t)},
\end{equation}
where $s,t\in\mathbb{C}$ are arbitrary, $\lambda$ is defined by \eqref{eqn:defn of lambda-mu}
and
\begin{eqnarray*}\label{equ:equality of gamma1}&&\gamma_1(s,t)=\big\Vert\, |B|(I_n-tD_2^{-1})+V^*(tD_1^*-\bar{s}D_1^{-1})A\,\big\Vert_F, \\
\label{equ:equality of gamma2}&&\gamma_2(s,t)=\big\Vert\, |A|(sD_2-I_n)\big\Vert_{F},\\
\label{equ:equality of gamma3}&&\gamma_3(s,t)=\frac{\big\Vert|B|(I_n-tD_2^{-1})U^*U
+V^*(tD_1^*-\bar{s}D_1^{-1})A-V^*V(\bar{s}D_2^*-I_n)|A|\big\Vert_{F}}
{\sqrt{\lambda+1}}.
\end{eqnarray*}
\end{theorem}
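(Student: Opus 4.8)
The plan is to mirror the proof of Theorem~\ref{thm:result of subunitary polar factor} step by step, replacing the $m\times n$ difference $V-U$ by the $n\times n$ Hermitian difference $|B|-|A|$. Writing $P=V^*V=B^{\dag}B$ and $Q=U^*U=A^{\dag}A$ for the two range projections (so that $P|B|=|B|=|B|P$ and $Q|A|=|A|=|A|Q$), I first split
\[|B|-|A|=\Theta_1+\Theta_2+\Theta_3,\quad \Theta_1=P(|B|-|A|)Q,\ \ \Theta_2=|B|(I_n-Q),\ \ \Theta_3=-(I_n-P)|A|,\]
and check that $\Theta_1=|B|U^*U-V^*V|A|$ sits in the $(P,Q)$-block, $\Theta_2$ in the $(P,I_n-Q)$-block, and $\Theta_3$ in the $(I_n-P,Q)$-block. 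By the Frobenius identities \eqref{eqn: P and I-P f norm} and \eqref{eqn: Q and I-Q f norm} these three pieces are mutually orthogonal, so that $\big\||B|-|A|\big\|_F^2=\|\Theta_1\|_F^2+\|\Theta_2\|_F^2+\|\Theta_3\|_F^2$.

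The core step is to produce a structured Sylvester equation for the main block $\Theta_1$. Using $|B|^2=B^*B$, $|A|^2=A^*A$ and the support relations one computes directly that $|B|\Theta_1+\Theta_1|A|=B^*B\,U^*U-V^*V\,A^*A$. I then expand the right-hand side with the two free-parameter identities \eqref{eqn:expended expression of B-A} and \eqref{eqn:expended expression of A-B}: substituting $BU^*U=tD_1^*A+B(I_n-tD_2^{-1})U^*U$ (obtained from \eqref{eqn:expended expression of B-A}) and the adjoint of $AV^*V=s(D_1^{-1})^*B+A(I_n-sD_2)V^*V$ (from \eqref{eqn:expended expression of A-B}), together with $B^*=|B|V^*$, $A=U|A|$ and $A^*=|A|U^*$, regroups the right-hand side into $|B|\hat C+\hat D|A|$ with
\[\hat C=|B|(I_n-tD_2^{-1})U^*U+V^*(tD_1^*-\bar sD_1^{-1})A,\qquad \hat D=V^*V(\bar sD_2^*-I_n)|A|.\]
A short verification using \eqref{eqn:basic properties of U-1}, \eqref{eqn:basic properties of U-2} and the identities $|A|\,|A|^{\dag}=U^*U$, $|B|^{\dag}|B|=V^*V$ shows that $\hat C$, $\hat D$ and $\Theta_1$ all satisfy the two-sided support hypotheses $V^*V(\cdot)=(\cdot)=(\cdot)U^*U$ required by Theorem~\ref{thm:main result-lambda-mu}, and that the relevant constant there coincides with $\lambda$ in \eqref{eqn:defn of lambda-mu} because $\big\||B|\big\|_2=\|B\|_2$, $\big\||B|^{\dag}\big\|_2=\|B^{\dag}\|_2$, $\big\||A|\big\|_2=\|A\|_2$ and $\big\||A|^{\dag}\big\|_2=\|A^{\dag}\|_2$.

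Applying Theorem~\ref{thm:main result-lambda-mu} to this equation yields $\|\Theta_1\|_F\le\tfrac12\big(\|\hat C+\hat D\|_F+\mu\|\hat C-\hat D\|_F\big)$, and the elementary estimate $(x+\mu y)^2\le 2(x^2+\mu^2 y^2)$ together with $\tfrac{1-\mu^2}{2}=\tfrac{1}{\lambda+1}$ (exactly as in the passage leading to \eqref{eqn:derived upper bound of Omega1}) gives
\[\|\Theta_1\|_F^2\le\frac{\|\hat C+\hat D\|_F^2+\|\hat C-\hat D\|_F^2}{2}-\frac{\|\hat C-\hat D\|_F^2}{\lambda+1}.\]
Finally I absorb $\Theta_2,\Theta_3$ into $\hat C,\hat D$. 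Rewriting $\Theta_2=|B|(I_n-tD_2^{-1})(I_n-U^*U)$ via \eqref{eqn:expended expression of B-A}, and observing that $(I_n-V^*V)D_2^*|A|=0$ because $\mathcal R(D_2^*|A|)=\mathcal R(B^*)=\mathcal R(V^*V)$, so that $\Theta_3=(I_n-V^*V)(\bar sD_2^*-I_n)|A|$, one finds that $W_1:=\hat C+\Theta_2$ and $W_2:=\hat D+\Theta_3$ collapse to precisely the matrices inside $\gamma_1(s,t)$ and $\gamma_2(s,t)$. Since $\hat C\pm\hat D$, $\Theta_2$ and $\Theta_3$ lie in three mutually orthogonal blocks, the parallelogram identity (as in \eqref{eqn:middle term upper bound-11}) gives $\tfrac12\big(\|\hat C+\hat D\|_F^2+\|\hat C-\hat D\|_F^2\big)+\|\Theta_2\|_F^2+\|\Theta_3\|_F^2=\|W_1\|_F^2+\|W_2\|_F^2$, whence $\big\||B|-|A|\big\|_F^2\le\gamma_1^2+\gamma_2^2-\gamma_3^2$; taking the infimum over $s,t\in\mathbb C$ yields \eqref{equ:inequality of B+B-A+A}. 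I expect the main obstacle to be the bookkeeping in the second and fourth steps: reorganizing $B^*B\,U^*U-V^*V\,A^*A$ into exactly $|B|\hat C+\hat D|A|$ with the stated $\hat C,\hat D$, and verifying the two collapses $W_1=\hat C+\Theta_2$ and $W_2=\hat D+\Theta_3$ — the latter hinging on the range identity $(I_n-V^*V)D_2^*|A|=0$, which is what makes $\gamma_2$ reduce to the single term $\big\||A|(sD_2-I_n)\big\|_F$.
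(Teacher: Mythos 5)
Your proposal is correct and follows essentially the same route as the paper's proof: the same three-block orthogonal splitting of $|B|-|A|$ into $\Upsilon_1,\Upsilon_2,\Upsilon_3$, the same structured Sylvester equation $|B|\Upsilon_1+\Upsilon_1|A|=|B|C+D|A|$ with the same $C$ and $D$, the same application of Theorem~\ref{thm:main result-lambda-mu} with $\tfrac{1-\mu^2}{2}=\tfrac{1}{\lambda+1}$, and the same absorption of $\Upsilon_2,\Upsilon_3$ into $W_1,W_2$. The only cosmetic difference is that you obtain the Sylvester equation by computing $|B|\Upsilon_1+\Upsilon_1|A|=B^*B\,U^*U-V^*V\,A^*A$ directly and regrouping the right-hand side (which does check out, using $BD_2^{-1}=D_1^*A$ and $B^*D_1^{-1}=D_2^*A^*$), whereas the paper reaches the same equation by adjoint manipulations of \eqref{equ:middle-term-equality2}.
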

\begin{proof} Since $|B|$ is Hermitian positive semi-definite, we have
\begin{equation}\label{equ:equality of B+B-1}
|B|\cdot|B|^{\dag}=|B|^{\dag}\cdot|B|=P_{\mathcal{R}(|B|)}
=P_{\mathcal{R}(B^*)}=B^{\dag}B=V^*V.
\end{equation}
As shown in the derivation of \eqref{eqn:V-U is the summation of three terms}, by \eqref{eqn:equality of A+A}  and  \eqref{equ:equality of B+B-1} we can obtain
\begin{equation*}|B|-|A|=\Upsilon_1+\Upsilon_2+\Upsilon_3,\end{equation*}
 where
\begin{eqnarray}\label{eqn:defn of Upsilon1}&&\Upsilon_1=|B|\cdot U^*U-V^*V\cdot|A|=V^*V\cdot\Upsilon_1\cdot U^*U,\\
&&\label{eqn:defn of Upsilon2}\Upsilon_2=|B|(I_n-U^*U)=V^*V\cdot\Upsilon_2\cdot(I_n-U^*U),\\
&&\label{eqn:defn of Upsilon3}\Upsilon_3=-(I_n-V^*V)\cdot|A|=(I_n-V^*V)\cdot \Upsilon_3.
\end{eqnarray}
By (\ref{eqn: P and I-P f norm}) and (\ref{eqn: Q and I-Q f norm}) we have
\begin{equation}\label{equ:computation of B+B-A+A}\big\Vert\, |B|-|A|\, \big\Vert_F^2=\Vert \Upsilon_1+\Upsilon_2\Vert_F^2+\Vert \Upsilon_3\Vert_F^2=\Vert \Upsilon_1 \Vert_F^2+\Vert \Upsilon_2 \Vert_F^2+\Vert \Upsilon_3 \Vert_F^2.
\end{equation}
In what follows we first deal with $\Vert \Upsilon_1\Vert_{F}^2$. Note that $|B^*|V=B=V|B|$, so if
Pre-multiply $U$ and post-multiply $V$, then from \eqref{equ:middle-term-equality2} we can obtain
\begin{eqnarray*}U|A|V^*V-UU^*V|B|=U|A|(I_n-sD_2)V^*V+UU^*\big(s(D_1^{-1})^*-I_m\big)V|B|,
\end{eqnarray*}
which gives by taking $*$-operation that
\begin{equation*}
V^*V|A|U^*-|B|V^*UU^*=V^*V(I_n-\bar{s}D_2^*)|A|U^*+|B|V^*(\bar{s}D_1^{-1}-I_m)UU^*.
\end{equation*}
As $U^*U|A|=|A|=|A|U^*U$, post-multiplying the equation above by $U|A|$ yields
\begin{eqnarray}\label{eqn:one side equation-middle term-1}
V^*V|A|\cdot|A|-|B|V^* U|A|&=&V^*V(I_n-\bar{s}D_2^*)|A|\cdot|A|\nonumber\\
& &+|B|V^*(\bar{s}D_1^{-1}-I_m)\,U|A|.
\end{eqnarray}
It is notable that we can get $A=(D_1^{-1})^* B D_2^{-1}$ from $B=D_1^* A D_2$, so it can be deduced from \eqref{eqn:one side equation-middle term-1}
that for any $t\in\mathbb{C}$,
\begin{eqnarray*}
U^*U|B|\cdot|B|-|A|U^*V|B|&=&U^*U\big(I_n-\bar{t}(D_2^*)^{-1}\big)|B|\cdot|B|\nonumber\\
& &+|A|U^*(\bar{t}D_1-I_m)\,V|B|,
\end{eqnarray*}
which gives by taking $*$-operation that
\begin{eqnarray}\label{eqn:one side equation-middle term-2}
|B|\cdot |B| U^*U-|B|  V^* U |A|&=&|B|\cdot|B|(I_n-tD_2^{-1})U^*U\nonumber\\
& &+|B|V^*(tD_1^*-I_m)U|A|.
\end{eqnarray}
Subtracting \eqref{eqn:one side equation-middle term-1} from \eqref{eqn:one side equation-middle term-2} leads to
\begin{equation}\label{equ:one side equation-middle term-3}
|B|\cdot |B|U^*U-V^*V|A|\cdot |A|=
|B|\cdot C+D\cdot |A|,
\end{equation}
where
\begin{eqnarray}\label{eqn:derived new expression of C}
&&C=|B|(I_n-tD_2^{-1})U^*U+V^*(tD_1^*-\bar{s}D_1^{-1})U|A|,\\
\label{eqn:derived new expression of D}&&D=V^*V(\bar{s}D_2^*-I_n)|A|.
\end{eqnarray}
Moreover, we have
\begin{eqnarray*}
|B|\,|B|U^*U-V^*V|A|\,|A|&=&|B|\,|B|U^*U-|B|\,|A|+|B|\,|A|-V^*V|A|\,|A|\\
&=&|B|\cdot\Upsilon_1+\Upsilon_1\cdot|A|,
\end{eqnarray*}
which is combined with \eqref{equ:one side equation-middle term-3} to get
\begin{eqnarray*}\label{equ:induced new Sylvester equation-2}
|B|\cdot\Upsilon_1+\Upsilon_1\cdot|A|=|B|\cdot C+D\cdot|A|
\end{eqnarray*}
with the property that
\begin{equation*}|B|^\dag\cdot |B|\cdot  X=X=X\cdot |A|\cdot |A|^\dag\ \mbox{for any $X\in\{\Upsilon_1,C,D\}$}.\end{equation*}
Then as in the proof of Theorem~\ref{thm:result of subunitary polar factor}, we can obtain
\begin{equation}\label{eqn:derived upper bound of Upsilon1}
\Vert\Upsilon_1\Vert_{F}^2\le\frac{\Vert C+D\Vert_F^2+\Vert C-D\Vert_F^2}{2}-\frac{\Vert C-D\Vert_F^2}{\lambda+1},
\end{equation}
where $\lambda$ is given by \eqref{eqn:defn of lambda-mu}.

Next, we modify the expressions of $\Upsilon_2$ and $\Upsilon_3$. From  \eqref{eqn:defn of Upsilon2}, \eqref{eqn:defn of Upsilon3} and \eqref{equ:trivial equality from A and B}, we have
\begin{eqnarray*} \Upsilon_2&=&V^*V|B|(I_n-U^*U)=V^*B(I_n-A^{\dag}A)=V^*B(I_n-tD_2^{-1})(I_n-A^{\dag}A)\\
&=&V^*V|B|(I_n-tD_2^{-1})(I_n-U^*U)=|B|(I_n-tD_2^{-1})(I_n-U^*U),\\
\Upsilon_3&=&-(I_n-B^{\dag}B)A^*U=-(I_n-B^{\dag}B)(I_n-\bar{s}D_2^*)A^*U\\
&=&-(I_n-B^{\dag}B)(I_n-\bar{s}D_2^*)|A|U^*U=(I_n-V^*V)(\bar{s}D_2^*-I_n)|A|.
\end{eqnarray*}
The new expressions of $\Upsilon_2$ and $\Upsilon_3$ above, together with \eqref{eqn:derived new expression of C} and \eqref{eqn:derived new expression of D},  yield
\begin{eqnarray}\label{equ:new expression of W1}W_1&\stackrel{def}{=}&C+\Upsilon_2=|B|\cdot(I_n-tD_2^{-1})+V^*(tD_1^*-\bar{s}D_1^{-1})A,\\
\label{equ:new expression of W2}W_2&\stackrel{def}{=}&D+\Upsilon_3=(\bar{s}D_2^*-I_n)\cdot |A|.
\end{eqnarray}
As in the proof of Theorem~\ref{thm:result of subunitary polar factor}, based on \eqref{eqn:derived upper bound of Upsilon1}--\eqref{equ:new expression of W2},  we can obtain
\begin{eqnarray*}\Vert V-U\Vert^2_F\le\Vert W_1\Vert_F^2+\Vert W_2\Vert_F^2-\frac{\Vert C-D\Vert_F^2}{\lambda+1}=\sum_{i=1}^3 \gamma_i^2(s,t).
\end{eqnarray*}This completes the proof of \eqref{equ:inequality of B+B-A+A}.
\end{proof}

A result of \cite{Hong-Meng-Zheng} can be derived from the preceding theorem  as follows:
\begin{corollary}{\rm \cite[Corollary~3.3 (2)]{Hong-Meng-Zheng}}\ Let $B$ be the multiplicative perturbation of $A\in\mathbb{C}^{m\times n}$ given by (\ref{equ:defn of B}), and let $A=U|A|$ and $B=V|B|$ be the generalized polar decompositions of $A$ and $B$, respectively. Then
\begin{equation*}\label{equ:inequality of Chen-Li-Sun}
\Vert\, |B|-|A|\,\Vert_{F}\leq \sqrt{\rho^2+\Vert A\Vert_2^2\cdot \Vert I_n-D_2\Vert_F^2},
\end{equation*}
where
\begin{equation*}\rho=\Vert B\Vert_2\cdot\Vert I_n-D_2^{-1}\Vert_F+\Vert D_1^*-D_1^{-1}\Vert_F\cdot \Vert A\Vert_2.\end{equation*}
\end{corollary}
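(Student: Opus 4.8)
The plan is to derive the inequality as a direct specialization of Theorem~\ref{thm:result of positive semidefinite polar factor}, paralleling exactly the way the preceding corollary was extracted from Theorem~\ref{thm:result of subunitary polar factor}. Since \eqref{equ:inequality of B+B-A+A} holds for every pair $s,t\in\mathbb{C}$, I would simply take $s=t=1$ and then estimate each of the three quantities $\gamma_1(1,1)$, $\gamma_2(1,1)$ and $\gamma_3(1,1)$ using the elementary Frobenius-norm inequalities recalled at the start of Section~\ref{sec:Frobenius norm bounds}, in particular $\Vert XY\Vert_F\le\min\{\Vert X\Vert_2\,\Vert Y\Vert_F,\Vert X\Vert_F\,\Vert Y\Vert_2\}$.

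First I would record the two norm identities $\big\Vert\,|A|\,\big\Vert_2=\Vert A\Vert_2$ and $\big\Vert\,|B|\,\big\Vert_2=\Vert B\Vert_2$, which hold because $|A|=(A^*A)^{\frac12}$ and $|B|=(B^*B)^{\frac12}$ share the singular values of $A$ and $B$, together with $\Vert V^*\Vert_2\le 1$ since $V$ is a partial isometry. Applying the triangle inequality to $\gamma_1(1,1)=\big\Vert\,|B|(I_n-D_2^{-1})+V^*(D_1^*-D_1^{-1})A\,\big\Vert_F$ and then chaining the submultiplicative bound on each summand gives $\big\Vert\,|B|(I_n-D_2^{-1})\big\Vert_F\le\Vert B\Vert_2\,\Vert I_n-D_2^{-1}\Vert_F$ and $\big\Vert V^*(D_1^*-D_1^{-1})A\big\Vert_F\le\Vert V^*\Vert_2\,\Vert D_1^*-D_1^{-1}\Vert_F\,\Vert A\Vert_2\le\Vert D_1^*-D_1^{-1}\Vert_F\,\Vert A\Vert_2$, so that $\gamma_1(1,1)\le\rho$. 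Likewise $\gamma_2(1,1)=\big\Vert\,|A|(D_2-I_n)\big\Vert_F\le\Vert A\Vert_2\,\Vert I_n-D_2\Vert_F$.

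Finally, since $\gamma_3(1,1)\ge 0$, discarding the term $-\gamma_3^2(1,1)$ only enlarges the right-hand side, whence
\begin{equation*}
\big\Vert\,|B|-|A|\,\big\Vert_F\le\sqrt{\gamma_1^2(1,1)+\gamma_2^2(1,1)-\gamma_3^2(1,1)}\le\sqrt{\gamma_1^2(1,1)+\gamma_2^2(1,1)}\le\sqrt{\rho^2+\Vert A\Vert_2^2\,\Vert I_n-D_2\Vert_F^2},
\end{equation*}
which is the asserted bound. The computation is entirely routine once the specialization is made; the only points requiring a little care are choosing $s=t=1$ rather than optimizing over $s,t$, applying the three-factor form of the submultiplicative inequality to the mixed product $V^*(D_1^*-D_1^{-1})A$, and observing that the subtracted $\gamma_3^2$ term may be dropped. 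I expect no genuine obstacle: the full strength of the estimate already resides in Theorem~\ref{thm:result of positive semidefinite polar factor}, and this corollary merely records what it yields at the distinguished point $s=t=1$.
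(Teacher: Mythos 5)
Your proposal is correct and follows essentially the same route as the paper: specialize Theorem~\ref{thm:result of positive semidefinite polar factor} at $s=t=1$, bound $\gamma_1(1,1)\le\rho$ and $\gamma_2(1,1)\le\Vert A\Vert_2\,\Vert I_n-D_2\Vert_F$ via the triangle inequality and the submultiplicative Frobenius--spectral norm estimates, and discard the nonnegative $\gamma_3^2(1,1)$ term. The paper states these two bounds without the intermediate details you supply, but the argument is identical.
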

\begin{proof} With the notations as in the proof of Theorem~\ref{thm:result of positive semidefinite polar factor}, we have
\begin{equation*}\gamma_1(1,1)\le \rho\ \mbox{and}\ \gamma_2(1,1)\le \Vert A\Vert_2\cdot \Vert I_n-D_2\Vert_F.\end{equation*}
From \eqref{equ:inequality of B+B-A+A} we have
\begin{eqnarray*}\Vert\, |B|-|A|\,\Vert_{F}\leq\sqrt{\gamma_1^2(1,1)+\gamma_2^2(1,1)-\gamma_3^2(1,1)}\leq\sqrt{\gamma_1^2(1,1)+\gamma_2^2(1,1)}.
\end{eqnarray*}
The desired upper bound then follows.
\end{proof}

\vspace{2ex}
\noindent\textbf{Acknowledgement}

\vspace{2ex}
 The authors thank the referees for helpful comments and suggestions.

\end{document}